\documentclass[a4paper,reqno,10pt]{amsart}
\usepackage{amsthm,amssymb,amscd,amsfonts,amsmath,hyperref,amsaddr}

\newtheorem{prop}{Proposition}
\newtheorem{lem}{Lemma}

\newtheorem{rem}{Remark}
\newtheorem{coro}{Corollary}

\numberwithin{equation}{section}
\begin{document}
\title{Global nilpotent cone is isotropic: parahoric torsors on curves}
\author{Rohith Varma}
\address{Tata Institute of Fundamental research, Homi Bhabha Road, Colaba,
	Mumbai-400005}
\email{rvarma.kvm@gmail.com}
\maketitle
\begin{abstract}
	In this note we show that the global nilpotent cone is an isotropic
	substack of the cotangent bundle of the moduli stack of parahoric
	torsors on a smooth projective curve. Further for the case of
	symplectic parabolic bundles, we show that the global nilpotent cone
	is infact a Lagrangian complete intersection substack.
\end{abstract}
\section{introduction}
Let $X$ be a smooth projective curve over $\mathbb{C}$ of genus $g >1$ and
$G$ a complex semisimple group. Denote by $Bun_G$, the
moduli stack of principal $G$-bundles on $X$ and by $\mathbb{P}$, the universal
bundle in $X \times Bun_G$. Let $T^*Bun_G = Spec(Sym^*(ad(\mathbb{P})))$
be the cotangent stack of $Bun_G$. For a scheme $S$ and a principal bundle
$E$ on $X \times S$, it is easy to see that [see \cite{G}]
\[
T^*_E Bun_G = H^0(X \times S, ad(E) \otimes \Omega^1_{X \times S/S})
\]
Now for a point $y \in X \times S$, and $s \in H^0(X \times S, ad(E) \otimes \Omega^1_{X \times S/S})$, we have
$s(y) \in ad(E)(y) \otimes \Omega^1_{X \times S/S}(y) \simeq \mathfrak{g} \otimes k(y)$. Thus it makes sense to call $s$ nilpotent if for
any point $y \in X \times Y$, we have $s(y)$ is a nilpotent element in
$\mathfrak{g} \otimes k(y)$. Following Laumon(\cite{L}) and Ginzburg(\cite{G}), the 
\textit{global nilpotent cone} of $T^*Bun_G$ is defined as the substack
$\mathcal{N}ilp$, whose $S$-valued points are
\[
\mathcal{N}ilp(S) = \{(P,s) \mid P \in Bun_G(S), s \in H^0(X \times S, ad(P) \otimes \Omega^1_{X \times S/S}) \,\ \text{is nilpotent} \}.
\]

An equivalent definition of the nilpotent substack $\mathcal{N}ilp$, is using the Hitchin morphism. Let $\{F_1,\ldots,F_l\}$ be a minimal set of
homogeneous generators for the algebra $\mathbb{C}[\mathfrak{g}]^G$.
Let $d_i$ be the degree of $F_i$ and denote by $H$, the affine space
\[
H = \bigoplus_{i=1}^l H^{0}(X,K_X^{d_i})
\]
Then $dim(H) = dim(Bun_G)$ and Hitchin defines a morphism
\[
\chi : T^*Bun_G \rightarrow H
\]
where for a  scheme $S$, $P \in Bun_G(S)$ and $s \in H^0(X \times S, ad(P) \otimes \Omega^1_{X \times S/S})$, $\chi(s) = (F_1(s),\ldots,F_l(s))$.
Then it follows that $\mathcal{N}ilp$ is the fiber of $\chi$ over $0 \in H$.\\

The \textit{global nilpotent cone} $\mathcal{N}ilp$, has been shown
to be a \textit{Lagrangian} substack of $T^*Bun_G$ by various authors(see \cite{G}, \cite{BD}). As explained in \cite {BD}, it follows from this
fact that the Hitchin morphism $\chi$ is flat and surjective and
further $T^*Bun_G$ is good in the sense of Beilinson and Drinfeld \cite{BD}.\\

We will study in this note, a natural notion of the \textit{global nilpotent
cone} in the case of the cotangent	stack of the moduli stack of parahoric torsors as defined in \cite{BS}. The more familiar notion of parabolic
bundles on curves, is a special case of parahoric torsors. 
The main results we derive regarding the \textit{global nilpotent cone}
for parahoric torsors are the following:\\
(A) The \textit{global nilpotent cone} for parahoric torsors is an
isotropic substack of the cotangent stack of the moduli stack of parahoric
torsors.\\ (see Section~\ref{sec:3},Proposition~\ref{prop:1}).\\
(B) We consider a natural analogue of the Hitchin morphism in the
case of parahoric torsors and conclude from Propositon~\ref{prop:1}, that
if the dimension of the image of the Hitchin morphism is less than or equal to that
of the dimension of the moduli stack of torsors, then the 
\textit{global nilpotent cone} is infact Lagrangian.\\
(C) We prove in the case of the symplectic group $Sp_{2n}$, that the
Hitchin morphism factors through a subvariety of dimension the same as that
of the moduli stack and hence the \textit{global nilpotent cone} is Lagrangian.\\

The case of parabolic bundles with full flags, has been previously studied by 
Faltings \cite{F} and the \textit{global nilpotent cone} was proved
to be Lagrangian. The case of $G = GL_n,SL_n$, has been studied as well
by Peter Scheinost and Martin Schottenloher in \cite{meta}. This work 
is mainly a result of trying to understand the results in \cite{meta}.\\

\section{parabolic bundles}
Let us assume from now on that $G$ is simple and simply-connected.\\
Recall(see \cite{TW})
a parabolic $G$- bundle on $(X,x)$ of weight $\theta$, is a triple $(E,\phi,\theta)$, where\\
(i)E is a principal $G$-bundle.\\
(ii)$\theta \in \mathfrak{U}^o := \{\theta \in Y(T) \otimes \mathbb{Q} \mid (\theta,\alpha_i) \geq 0, \,\ (\theta,\alpha_0) <1  \}$\\
(iii) $\phi$ is a reduction of structure group of $E$ at $x$ to the
parabolic subgroup $P_{\theta} \subset G$, determined by $\theta$.\\

It was shown by Balaji and Seshadri (\cite{BS}), that one can equivalently think of parabolic $G$-bundles on $(X,x)$, as torsors under certain group schemes which are called Bruhat-Tits group schemes. Let us
briefly recall the results from \cite{BS}:\\
Let $\theta \in \mathfrak{U}$ and
for any root $r \in R$, consider the integer
\[
m_r(\theta) = -\lfloor (\theta,r) \rfloor
\]
where $(,) : Y(T) \times X(T) \rightarrow \mathbb{Z}$, is the usual pairing.

Identify the completed local ring $A = \widehat{\mathcal{O}}_{X,x}$, with
the power series ring $\mathbb{C}[[z]]$ .
We denote by $K$, the fraction field of $A$.
Then the subgroup
\[
\mathcal{P}_{\theta} := <T(A), z^{m_r(\theta)}U_r(A)> \subset G(K) 
\]
is a parahoric subgroup in the sense of Bruhat-Tits. Thus from
Bruhat-Tits theory, we get
a smooth affine group scheme $\mathcal{G}_{\theta}$ over $spec(A)$,
which satisfies:\\
(i) $\mathcal{G}_{\theta} \times_{spec(A)} Spec(K) \cong G \times spec(K)$.\\
(ii)$\mathcal{G}_{\theta}(A) = \mathcal{P}_{\theta}$.\\
One can construct a smooth
affine group scheme $\mathcal{G}_{X,x,\theta}$ on $X$ such that:\\
(i)$\mathcal{G}_{X,x,\theta}\mid_{X-x} \cong G \times (X-x)$.\\
(ii)$\mathcal{G}_{X,x,\theta}\mid_{spec(A)} \cong \mathcal{G}_{\theta}$.
In \cite{BS}, it is shown that for $\theta \in \mathfrak{U}^o$, we have
\[
\mathcal{P}_{\theta} = ev^{-1}(P_{\theta}).
\]
where $ev : G(A) \rightarrow G$ is the natural map and $P_{\theta}$ is 
the parabolic subgroup determined by $\theta$. Further in this case,
a $\mathcal{G}_{X,x,\theta}$ torsor on $(X,x)$ is the same as a parabolic
$G$-bundle of weight $\theta$.\\

For the rest of this article, a parabolic $G$-bundle of weight $\theta$
on $(X,x)$ is a $\mathcal{G}_{X,x,\theta}$ torsor on $(X,x)$.
We will denote by $Bun_{\mathcal{G}_{X,x,\theta}}$, the moduli stack
of parabolic $G$ bundles of weight $\theta$ on $(X,x)$.

\section{Global nilpotent cone of $T^*Bun_{\mathcal{G}_{X,x,\theta}}$}\label{sec:3}
We have $Bun_{\mathcal{G}_{X,x,\theta}}$ is a smooth equidimensional algebraic stack. Let $S$ be a scheme over $spec(\mathbb{C})$.
From standard deformation theory, we have
\[
T^*Bun_{\mathcal{G}_{X,x,\theta}}(S) = \{(E,s) \mid E \in Bun_{\mathcal{G}_{X,x,\theta}}(S), s \in H^{0}(X \times S, ad(E)^*\otimes \Omega^1_{X\times S/S}) \}.
\]
Since $\mathcal{G}_{X,x,\theta}\mid_{X-x} \cong G \times (x-x)$,
we have, via the isomorphism $\mathfrak{g} \cong \mathfrak{g}^{\vee}$
induced by the killing form,
$ad(E)^* \otimes \Omega^1_{X \times S/S} \mid_{X -x \times S} \cong ad(E)
\otimes \Omega^1_{X\times S/S} \mid_{X -x \times S}$. Thus for
any point $y \in X-x \times S$, we have
\[
s(y) \in \mathfrak{g} \otimes k(y).
\]
Thus we say $s\mid_{X-x \times S}$ is nilpotent if 
\[
\forall y \in X-x \times S, \,\ s(y) \in \mathfrak{g} \otimes k(y) \,\ \text{is nilpotent}.
\]
We define the \textit{global nilpotent cone} as the substack
$\mathcal{N}ilp_{X,x,\theta}$, defined as 
\[
\mathcal{N}ilp_{X,x,\theta}(S) = \{ (E,s) \in T^*Bun_{\mathcal{G}_{X,x,\theta}} \mid s\mid_{X-x \times S} \text{is nilpotent} \}.
\]
We then have the following 
\begin{prop}\label{prop:1}
	$\mathcal{N}ilp_{X,x,\theta}$ is an isotropic substack of $T^* Bun_{\mathcal{G}_{X,x,\theta}}$.
\end{prop}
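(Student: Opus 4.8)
The plan is to verify isotropy on the smooth locus of $\mathcal{N}ilp_{X,x,\theta}$ by a direct computation of the canonical symplectic form on tangent vectors, reducing the required vanishing to the invariance of the Killing form together with Stokes' theorem. First I would fix a point $(E,s)$ of the smooth locus and identify $T_{(E,s)}T^*Bun_{\mathcal{G}_{X,x,\theta}}$ with the first hypercohomology $\mathbb{H}^1$ of the deformation complex $[ad(E) \xrightarrow{ad_s} ad(E)\otimes\Omega^1_X]$, using on $X-x$ the Killing-form identification $ad(E)^*\cong ad(E)$. In a Dolbeault model a tangent vector is a pair $(\beta,\phi)$ with $\beta\in A^{0,1}(ad(E))$ and $\phi\in A^{1,0}(ad(E))$ subject to the linearized equation $\bar\partial_E\phi+[\beta,s]=0$, and the symplectic form is $\omega((\beta_1,\phi_1),(\beta_2,\phi_2))=\int_X\langle\beta_1,\phi_2\rangle-\langle\beta_2,\phi_1\rangle$, with $\langle\,,\,\rangle$ the Killing form; the Serre-duality self-pairing of the complex is what makes $\omega$ non-degenerate, and $\mathcal{N}ilp_{X,x,\theta}$ is the zero fiber of the Hitchin-type morphism.

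Next I would characterize the tangent directions along $\mathcal{N}ilp_{X,x,\theta}$. Differentiating the defining invariants $F_j(s)=0$ shows that the Higgs component $\phi$ must be pointwise tangent to the nilpotent cone, i.e. $\langle dF_j(s(y)),\phi(y)\rangle=0$ for every $y\in X-x$; since each $dF_j(s(y))$ lies in the centralizer $\mathfrak{z}_{\mathfrak g}(s(y))$ and these span it at a regular nilpotent, this forces $\phi(y)\in\mathfrak{z}(s(y))^{\perp}=[\mathfrak g,s(y)]=\operatorname{im}(ad_{s(y)})$. On the locus where $s$ is regular nilpotent I would write $\phi_i=[\gamma_i,s]$ and substitute into $\omega$: the centralizer contributions drop out because $[\mu,s]=0$ for $\mu\in\mathfrak z(s)$, and an integration by parts against $\bar\partial_E s=0$ converts the remaining terms into $\int_X\langle[\gamma_1,\bar\partial_E\gamma_2],s\rangle-\langle[\gamma_1,\bar\partial_E\gamma_2],s\rangle=0$. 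The only Lie-theoretic inputs are the invariance $\langle[a,b],c\rangle=\langle a,[b,c]\rangle$, the identity $[s,s]=0$, and the orthogonality $[\mathfrak g,s]\perp\mathfrak z(s)$.

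The hard part will be the point $x$, where two difficulties must be controlled. Globally the Higgs deformation need not be of the form $[\gamma,s]$: the obstruction lives in the cokernel of $ad_s$ and in the jumps of $\mathfrak z(s)$ where $s$ fails to be regular, so one must argue on a dense open substack where $s$ is generically regular nilpotent and check that the computation survives passage to the closure of $\mathcal{N}ilp_{X,x,\theta}$. More seriously, nilpotency is imposed only on $X-x$, and at $x$ the sheaves $ad(E)^*\otimes\Omega^1_X$ and $ad(E)\otimes\Omega^1_X$ differ by the parahoric modification determined by $\theta$, so the Stokes step threatens to produce a local residue contribution at $x$.

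I expect the main technical work to be showing that this local term at $x$ vanishes. It should follow from the compatibility of the Killing form with the parahoric filtration — the pairing of the Lie algebra of $\mathcal G_{X,x,\theta}$ with its dual lattice being residue-free along $x$ — together with the fact that $\phi$ extends across $x$ as a section of $ad(E)^*\otimes\Omega^1_X$. Establishing this residue vanishing is the genuinely new point in the parahoric setting, and is where I would concentrate the effort; once it is in place, the invariance-plus-Stokes cancellation of the previous paragraph globalizes and gives $\omega|_{\mathcal{N}ilp_{X,x,\theta}}=0$ on the smooth locus, proving that $\mathcal{N}ilp_{X,x,\theta}$ is isotropic.
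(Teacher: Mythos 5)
Your route is genuinely different from the paper's. The paper does not compute the symplectic form at all: following Ginzburg's Lemma~5, it chooses a Borel reduction over the generic point of $X$ putting the nilpotent section $s$ into the nilradical, extends that reduction over $x$ by passing to a Galois cover on which the parahoric torsor becomes an ordinary equivariant $G$-bundle (properness of $G/B$ gives the extension, and descent produces a reduction to the Borel group scheme $\mathcal{B}\subset\mathcal{G}_{X,x,\theta}$), and then exhibits $\mathcal{N}ilp_{X,x,\theta}$ as $pr_2(Y_f)$ for the correspondence attached to $f:Bun_{\mathcal{B}}\to Bun_{\mathcal{G}_{X,x,\theta}}$; isotropy is then Ginzburg's general Lemma~3 about such correspondences. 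That argument treats all nilpotent sections uniformly, because every nilpotent element lies in the nilradical of some Borel.

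Your proposal has a genuine gap precisely where this uniformity matters. The deduction $\phi(y)\in\mathfrak{z}(s(y))^{\perp}=[\mathfrak{g},s(y)]$ from $dF_j(s(y))(\phi(y))=0$ uses Kostant's theorem that the differentials $dF_1(v),\dots,dF_l(v)$ span the centralizer $\mathfrak{z}(v)$, and this holds only for $v$ \emph{regular}. The global nilpotent cone has many irreducible components on which the generic Higgs field takes values in a fixed \emph{non-regular} nilpotent orbit over the generic point of $X$ (the zero section is the extreme case); on such a component the regular-nilpotent locus is empty, so "arguing on a dense open substack where $s$ is generically regular nilpotent and passing to the closure" does not reach it. For those components $\ker d\chi$ is strictly larger than $[\mathfrak{g},s]\otimes K_X$-valued directions, the tangent space to the reduced component is cut out by constraints you have not identified, and your computation says nothing. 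In addition, the step you yourself flag as the crux --- vanishing of the residue at $x$, i.e.\ that the Killing pairing between the lattices $\widehat{\mathfrak{p}}_\theta$ and $\widehat{\mathfrak{p}}_\theta^{\vee}=\frac{1}{z}(\mathfrak{n}+z\mathfrak{g}[[z]])$ is residue-free against the relevant Dolbeault representatives --- is asserted rather than proved, and proving it requires pinning down exactly which lattices $\beta$, $\phi$ and $\gamma$ live in at $x$. As it stands the argument establishes isotropy only of the components whose generic Higgs field is regular nilpotent and away from the technical difficulties at $x$; to repair it you would either have to redo the tangent-space analysis orbit by orbit, or switch to the Borel-reduction correspondence argument the paper actually uses, which sidesteps both problems.
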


\begin{proof}
	The proof is an adaptation of \cite[Lemma~5,pg~516]{G}and hence
	we will contend ourselves by explaining the necessary modifications involved. Let $E$ be a $\mathcal{G}_{X,x,\theta}$ torsor on $(X,x)$
	and $s \in H^{0}(X,ad(E)^* \otimes K_X)$ be nilpotent. Let $\mathcal{B} \subset \mathcal{G}_{X,x,\theta}$ be the borel subgroup scheme,
	defined as the flat closure of $X-x \times B$ in
	$\mathcal{G}_{X,x,\theta}$, for a borel subgroup $B \subset G$.
	From Heinloth~\cite{H}[Lemma~23], we have the natural morphism
	$f: Bun_{\mathcal{B}} \rightarrow Bun_{\mathcal{G}_{X,x,\theta}}$ is surjective.\\
	Now we can  find a finite Galois cover $p : Y \rightarrow X$,
	with Galois group $\Gamma$, such that the stack of $\Gamma$ equivariant principal
	$G$-bundles on $Y$ of a fixed local type determined by $\theta$, 
	is equivalent to the stack of $\mathcal{G}_{X,x,\theta}$ torsors on
	$X$. For a choice of such an equivariant bundle $F$ on $Y$, we have
	\[
	\mathfrak{R}_{Y/X}(Ad(F))^{\Gamma} \cong \mathcal{G}_{X,x,\theta}
	\]
	where $\mathfrak{R}_{Y/X}()$ denotes the restrcition of scalars functor.
	The equivalence of the categories of equivariant bundles on $Y$
	and $\mathcal{G}_{X,x,\theta}$ is obtained by
	\[
	N \mapsto \mathfrak{R}_{Y/X}^{\Gamma}(N \wedge F^{op})
	\]
	Further we have a $\Gamma$ equivariant $B$ reduction $F_B$ of $F$, such
	that $\mathfrak{R}_{Y/X}^{\Gamma}(Ad(F_B)) \cong \mathcal{B}$.
	As in \cite{G}, choose a $B$ reduction of $F$ over the generic point
	of $X$, so that we have $s \in \mathfrak{n}_F \otimes K_X$.
	If $N$ is the equivariant bundle on $Y$, which corresponds to $F$
	under the equivalence mentioned above, then we have an equivariant section 
	$\tilde{s}$ of $ad(N) \otimes K_Y$, which descends to $s$. 
	Further we get an equivariant $B$ reduction of $N$ over the generic point of $Y$, so that
	$\tilde{s}$ is a section of $\mathfrak{n}_N \otimes K_Y$.
    Since $G/B$ is projective, we can extend this $B$ reduction $N_B$ of $N$
    to the whole of $Y$. Thus we have a $\mathcal{B}$ reduction of
    $F$ given by 
    \[
    F_{\mathcal{B}} = \mathfrak{R}_{Y/X}^{\Gamma}(N_B \wedge E_B)
    \] 
    such that
    over the generic point, $s \in \mathfrak{n}_F \otimes K_X$.
    Thus we have $f^*(s) \in H^{0}(X,ad(F_{\mathcal{B}}) \otimes K_X) = 0$
    as it vanishes on the generic point.   
    Following the notations of \cite[Lemma 3]{G},  for $N_1 = Bun_{\mathcal{B}}$ and $N_2 = Bun_{\mathcal{G}_{X,x,\theta}}$, we have
    $\mathcal{N}ilp_{X,x,\theta} = pr_2(Y_f)$. The rest of the proof
    can be done exactly as in the proof of \cite{G}[Lemma~5].
\end{proof}

\begin{rem}
	In the above Proposition, we have nowhere used the fact that $\theta$ lies
	in the interior of the weyl alcove. Hence the above Lemma holds true
	for general parahoric torsors as considered in \cite{BS}.
\end{rem}
\section{Hitchin map for parahoric torsors}
Let $\widehat{\mathfrak{p}} \subset \mathfrak{g}((z))$ be a parahoric subalgebra.
Let $\kappa(,)$ be the killing form on $\mathfrak{g}$.
We define the \textit{dual}  of $\widehat{\mathfrak{p}}$ as
\[
\widehat{\mathfrak{p}}^{\vee} = \{ u \in \mathfrak{g}((z)) \mid \kappa(u,v) \in \mathbb{C}[[t]], \,\ \forall v \in \mathfrak{p} \}
\] 
We have a natural isomorphism
\[
\Psi : \widehat{\mathfrak{p}}^{\vee} \rightarrow Hom_{\mathbb{C}[[t]]}(\widehat{\mathfrak{p}},\mathbb{C}[[t]])
\]
given by
\[
\Psi(u):= \Psi(u)(v) = \kappa(u,v), \,\ \forall v \in \widehat{\mathfrak{p}}, \,\ u \in \widehat{\mathfrak{p}}^{\vee}.
\]
Hence it makes sense to call $\widehat{\mathfrak{p}}^{\vee}$, the \textit{dual}
of $\widehat{\mathfrak{p}}$.\\

Consider now the case when $\widehat{\mathfrak{p}} = ev^{-1}(\mathfrak{p})$ for a parabolic sub-algebra
$\mathfrak{p} \subset \mathfrak{g}$. We then have the following lemma
\begin{lem}\label{lem:1}
	Let $\widehat{\mathfrak{p}} = ev^{-1}(\mathfrak{p})$ for a parabolic
	subalgebra $\mathfrak{p} \subset \mathfrak{g}$.
	We then have a natural equality of lattices in $\mathfrak{g}((z))$
	\[
	\frac{1}{z}(\mathfrak{n} + z\mathfrak{g}[[z]]) = \widehat{\mathfrak{p}}^{\vee}.
	\]
	where $\mathfrak{n}$ is the nil-radical of $\mathfrak{p}$.
\end{lem}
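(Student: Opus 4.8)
The plan is to compute the dual lattice $\widehat{\mathfrak{p}}^{\vee}$ explicitly, coefficient by coefficient, and to match the result against the right-hand side. First I would record the explicit description $\widehat{\mathfrak{p}} = ev^{-1}(\mathfrak{p}) = \mathfrak{p} + z\mathfrak{g}[[z]]$, and extend the Killing form $\kappa$ to the $\mathbb{C}((z))$-bilinear pairing $\mathfrak{g}((z)) \times \mathfrak{g}((z)) \to \mathbb{C}((z))$ given by $\kappa(\sum_i u_i z^i, \sum_j v_j z^j) = \sum_k (\sum_{i+j=k} \kappa(u_i,v_j)) z^k$. Since $\widehat{\mathfrak{p}}$ is spanned topologically by the homogeneous elements $w$ with $w \in \mathfrak{p}$ (in degree $0$) and $w z^j$ with $w \in \mathfrak{g}$, $j \geq 1$, and since for each fixed $k$ only finitely many terms contribute to the coefficient of $z^k$, the defining condition $\kappa(u,v) \in \mathbb{C}[[z]]$ for all $v \in \widehat{\mathfrak{p}}$ is equivalent to checking it on these homogeneous generators.

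Writing $u = \sum_i u_i z^i$, I would then extract the resulting constraints. Pairing $u$ against $w z^j$ gives $\kappa(u, wz^j) = \sum_i \kappa(u_i, w) z^{i+j}$, whose coefficient in a degree $k<0$ is $\kappa(u_{k-j}, w)$. For the generators with $j \geq 1$ and $w \in \mathfrak{g}$ arbitrary, non-degeneracy of the Killing form on $\mathfrak{g}$ forces $u_i = 0$ whenever $i+j<0$; taking $j=1$, the sharpest such constraint, yields $u_i = 0$ for all $i \leq -2$, i.e. $u \in z^{-1}\mathfrak{g}[[z]]$. For the degree-$0$ generators $w \in \mathfrak{p}$, the only surviving negative-degree coefficient then comes from $i=-1$, giving the single condition $\kappa(u_{-1}, w)=0$ for all $w \in \mathfrak{p}$, i.e. $u_{-1} \in \mathfrak{p}^{\perp}$, the orthogonal complement of $\mathfrak{p}$ under $\kappa$. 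Since each step is an equivalence, running the argument in both directions gives $\widehat{\mathfrak{p}}^{\vee} = z^{-1}\mathfrak{p}^{\perp} + \mathfrak{g}[[z]]$.

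The key input, and the only genuinely non-formal step, is the standard identification $\mathfrak{p}^{\perp} = \mathfrak{n}$. I would establish this by one inclusion plus a dimension count: writing $\mathfrak{g} = \mathfrak{n}^{-} \oplus \mathfrak{l} \oplus \mathfrak{n}$ with $\mathfrak{p} = \mathfrak{l} \oplus \mathfrak{n}$, the vanishing $\kappa(\mathfrak{g}_\alpha, \mathfrak{g}_\beta) = 0$ for $\alpha + \beta \neq 0$ gives $\kappa(\mathfrak{n}, \mathfrak{l}) = 0$ (no root of $\mathfrak{l}$ is the negative of a root occurring in $\mathfrak{n}$) and $\kappa(\mathfrak{n}, \mathfrak{n}) = 0$, so $\mathfrak{n} \subseteq \mathfrak{p}^{\perp}$; since $\dim \mathfrak{p}^{\perp} = \dim \mathfrak{g} - \dim \mathfrak{p} = \dim \mathfrak{n}^{-} = \dim \mathfrak{n}$, this inclusion is an equality. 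Substituting $\mathfrak{p}^{\perp} = \mathfrak{n}$ gives $\widehat{\mathfrak{p}}^{\vee} = z^{-1}\mathfrak{n} + \mathfrak{g}[[z]] = \frac{1}{z}(\mathfrak{n} + z\mathfrak{g}[[z]])$, as claimed. I expect no serious obstacle: once the pairing is expanded coefficientwise the argument is essentially bookkeeping, and all the content is concentrated in the orthogonality relation $\mathfrak{p}^{\perp} = \mathfrak{n}$.
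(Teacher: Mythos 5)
Your proof is correct and follows essentially the same route as the paper's: a direct computation of the dual lattice using the orthogonality $\kappa(\mathfrak{g}_\alpha,\mathfrak{g}_\beta)=0$ for $\alpha+\beta\neq 0$. The only organizational difference is that you first reduce the coefficientwise bookkeeping to the single finite-dimensional identity $\mathfrak{p}^{\perp}=\mathfrak{n}$ (proved by inclusion plus dimension count), whereas the paper carries out the same computation with an explicit basis of root vectors and dual functionals; your packaging is arguably cleaner and avoids the notational slips around $R^{*}$ in the paper's version.
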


\begin{proof}
Let $\mathfrak{t} \subset \mathfrak{p}$ be a cartan subalgebra.
Let $R = R^{+} \cup R^{-}$ be the set of roots with respect to $\mathfrak{t}$.
We then have the familiar decomposition
\[
\mathfrak{g} = \mathfrak{t} \oplus \bigoplus_{\beta \in R} \mathfrak{g}_{\beta}
\]
Let $\{x_{\beta} \in \mathfrak{g}_{\beta}\}_{\beta \in R} \cup \{h_1,\cdots,h_l \mid h_i \in \mathfrak{t}\}$ be a basis of
$\mathfrak{g}$.
Using the killing form $\kappa$, we can identify $\mathfrak{g}$ and
$\mathfrak{g}^{\vee}$. Consider now
the linear function $f_{\beta}$ for $\beta \in R$, given by
\[
f_{\beta}(g) = \begin{cases}
0 & g \in \mathfrak{t}\\
0 & g \in \mathfrak{g}_{\alpha}, \alpha \neq \beta.\\
c & g = cx_{\beta}.
\end{cases}
\]
We then have under the isomorphism $\mathfrak{g} \cong \mathfrak{g}^{\vee}$,
via the killing form, the linear function $f_{\beta}$ corresponds to $x_{-\beta}$.
Similarly we can define $f_i$ by
\[
f_{i}(g) = \begin{cases}
0 & g \notin \mathfrak{t}\\
0 & g \in \mathfrak{t} ,\,\ g \in <h_{j}>_{j\neq i}\\
c & g = ch_i.
\end{cases}
\]
We then have $f_i$ corresponds to an element in $\mathfrak{t}$ under
the isomorphism $\mathfrak{g} \cong \mathfrak{g}^{\vee}$.
Now let $\mathfrak{p} = \mathfrak{t} \oplus \bigoplus_{\beta \in R^{+}}\mathfrak{g}_{\beta} \oplus \bigoplus_{\alpha \in R^* \subset R^{+}} \mathfrak{g}_{-\alpha}$ and $\mathfrak{n} = \bigoplus_{\delta \in R^*}\mathfrak{g}_{\delta}$.\\
We then have
\[
\widehat{\mathfrak{p}} = \mathfrak{t}[[z]] \oplus \bigoplus_{\beta \in R^{+}}\mathfrak{g}_{\beta} \otimes \mathbb{C}[[z]] \oplus \bigoplus_{\alpha \in R^* \subset R^{+}} \mathfrak{g}_{-\alpha} \otimes \mathbb{C}[[z]] \oplus
\bigoplus_{\delta \notin R^*} \mathfrak{g}_{-\delta} \otimes z\mathbb{C}[[z]]
\]
Hence 
\[
\widehat{\mathfrak{p}}^{\vee} = <f_i,\{f_{\beta}\}_{\beta \in R^{+}}, \{f_{-\alpha}\}_{\alpha \in R^*}, \{\frac{1}{z}f_{-\delta}\}_{\delta \notin R^{*}} \} >\mathbb{C}[[z]].
\]
Thus we get from the above discussion,
\begin{eqnarray*}
\widehat{\mathfrak{p}}^{\vee} & = &  <\mathfrak{t},\{\mathfrak{g}_{\beta}\}_{\beta \notin R^*},\{\frac{1}{z}\mathfrak{g}_{\delta}\}_{\delta \in R^*}>\mathbb{C}[[z]]\\
                              & = & \frac{1}{z}(\mathfrak{n} + z\mathfrak{g}[[z]])
\end{eqnarray*}
\end{proof}

Let $E$ be a $\mathcal{G}_{X,x,\theta}$ torsor on $X$. Let 
$\mathfrak{p} = Lie(\mathcal{P}_{\theta}) \subset \mathfrak{g}((z))$,
which is a parahoric subalgebra of $\mathfrak{g}((z))$. We then have
an idenitification of lattices in $\mathfrak{g}((z))$
\[
\widehat{ad(E)}^{*}_x = \mathfrak{p}^{\vee}
\]
where $\widehat{ad(E)}^*_x$ is the completion of the stalk of $ad(E)^*$ at $x$.
Now choose a homogeneous generating set $\{F_1,\ldots,F_l\}$ for $\mathbb{C}[\mathfrak{g}]^G$. Since the set $\widehat{p}^{\vee}$
is a bounded subset of $\mathfrak{g}((t))$, we have for $C>>0 \in \mathbb{N}$,
\[
\nu_t(F_i(\alpha)) \geq -C , \,\ \forall 1 \leq i \leq l, \,\ \forall \alpha \in \mathfrak{p}^{\vee}.
\]
Thus the we can define a Hitchin morphism
\[
\chi : Bun_{\mathcal{G}_{X,x,\theta}} \rightarrow \bigoplus_{i=1}^l H^{0}(X,K_X^{\otimes d_i}(Cx))
\]
An immediate corollary of Proposition~\ref{prop:1} is the following:
\begin{coro}\label{coro:1}
If we can find a subvariety $B \subset \bigoplus_{i=1}^l H^{0}(X,K_X^{\otimes d_i}(Cx))$, such that 
$dim(B) = dim(Bun_{\mathcal{G}_{X,x,\theta}})$
and	$\chi$ factors through $B$, then we have \\
(i)$dim(\mathcal{N}ilp_{X,x,\theta}) = dim(Bun_{\mathcal{G}_{X,x,\theta}})$\\
(ii) $\chi$ is flat and surjective.\\	
(iii)$\mathcal{N}ilp_{X,x,\theta}$ is a Lagrangian substack of $T^*Bun_{\mathcal{G}_{X,x,\theta}}$.
\end{coro}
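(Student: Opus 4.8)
The plan is to derive all three statements from Proposition~\ref{prop:1} by a dimension count, using the factorization through $B$ for the lower bound and the scaling action on the cotangent directions for flatness. Write $d=\dim Bun_{\mathcal{G}_{X,x,\theta}}$, so that $T^*Bun_{\mathcal{G}_{X,x,\theta}}$ is smooth of dimension $2d$ and carries its canonical symplectic structure. First I would record that $\mathcal{N}ilp_{X,x,\theta}=\chi^{-1}(0)$: a pair $(E,s)$ lies in $\mathcal{N}ilp_{X,x,\theta}$ exactly when $s|_{X-x}$ is nilpotent, which by invariant theory means $F_i(s)$ vanishes on the dense open $X-x$ for every $i$; since $F_i(s)\in H^0(X,K_X^{\otimes d_i}(Cx))$ is a section of a line bundle, vanishing on $X-x$ forces $F_i(s)=0$, i.e. $\chi(E,s)=0$. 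As each $F_i$ is homogeneous of positive degree the zero section maps to $0$, so $0$ lies in the image; factoring $\chi$ as $\bar\chi:T^*Bun_{\mathcal{G}_{X,x,\theta}}\to B$ we have $0\in B$.

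For (i) and (iii) I would bound $\dim\mathcal{N}ilp_{X,x,\theta}$ from two sides. By Proposition~\ref{prop:1} it is an isotropic substack of the $2d$-dimensional symplectic stack $T^*Bun_{\mathcal{G}_{X,x,\theta}}$, whence $\dim\mathcal{N}ilp_{X,x,\theta}\le d$. On the other hand $\mathcal{N}ilp_{X,x,\theta}=\bar\chi^{-1}(0)$ is a nonempty fiber of a morphism to $B$ with $\dim B=d$, so the fiber-dimension inequality gives $\dim\mathcal{N}ilp_{X,x,\theta}\ge \dim T^*Bun_{\mathcal{G}_{X,x,\theta}}-\dim B=2d-d=d$. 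Combining, $\dim\mathcal{N}ilp_{X,x,\theta}=d$, which is (i); together with isotropy this exhibits $\mathcal{N}ilp_{X,x,\theta}$ as a half-dimensional isotropic substack, hence Lagrangian, giving (iii).

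For (ii) I would exploit the $\mathbb{G}_m$-action $\lambda\cdot(E,s)=(E,\lambda s)$ on $T^*Bun_{\mathcal{G}_{X,x,\theta}}$ together with the weighted action $\lambda\cdot(c_i)_i=(\lambda^{d_i}c_i)_i$ on $H'=\bigoplus_{i=1}^l H^0(X,K_X^{\otimes d_i}(Cx))$, for which $\chi$ is equivariant; replacing $B$ by its $\mathbb{G}_m$-orbit closure (still of dimension $d$) I may assume $B$ is $\mathbb{G}_m$-stable. For $b$ in the image and $\lambda\ne 0$, scaling identifies $\bar\chi^{-1}(b)\cong\bar\chi^{-1}(\lambda\cdot b)$, and since all weights $d_i$ are positive $\lambda\cdot b\to 0$ as $\lambda\to 0$; upper semicontinuity of fiber dimension then forces $\dim\bar\chi^{-1}(b)\le\dim\bar\chi^{-1}(0)=d$. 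With the lower bound from the fiber-dimension inequality, every nonempty fiber of $\bar\chi$ has dimension exactly $d=\dim T^*Bun_{\mathcal{G}_{X,x,\theta}}-\dim B$. Since $T^*Bun_{\mathcal{G}_{X,x,\theta}}$ is smooth, hence Cohen--Macaulay on a smooth atlas, and $B$ is regular, equidimensionality of the fibers yields flatness of $\bar\chi$ by the miracle-flatness criterion (checked on an atlas). Flatness makes $\bar\chi$ open, so its image is open, dense and $\mathbb{G}_m$-stable; its complement in $B$ is closed and $\mathbb{G}_m$-stable, so if nonempty it contains the limit $0$ of each of its orbits, contradicting $0\in\mathrm{image}$. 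Hence $\bar\chi$ is surjective, completing (ii).

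The main obstacle I anticipate is the flatness step: the miracle-flatness criterion needs the base to be regular, whereas $B$ is a priori only a subvariety, so one must either check regularity of $B$ in the situations of interest (for the $Sp_{2n}$ case of (C) the relevant $B$ should be linear, hence smooth) or localize to the regular locus of $B$ and absorb the singular part into a lower-dimensional correction. A secondary point is making the fiber-dimension estimate and the $\mathbb{G}_m$-semicontinuity argument rigorous on the possibly non-separated and reducible cotangent stack, which I would handle by passing to a smooth presentation and arguing on each component, using that $Bun_{\mathcal{G}_{X,x,\theta}}$ is smooth and equidimensional.
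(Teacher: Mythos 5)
Your proposal is correct and follows essentially the same route as the paper's sketch (which defers to Ginzburg's Proposition~1, Corollary~9 and Theorem~10): the isotropy bound from Proposition~\ref{prop:1} against the fiber-dimension lower bound $\dim T^*Bun_{\mathcal{G}_{X,x,\theta}}-\dim B$ gives (i) and (iii), and the $\mathbb{C}^{\times}$-degeneration of an arbitrary fiber into $\mathcal{N}ilp_{X,x,\theta}$ gives equidimensionality of fibers, hence flatness and surjectivity. You additionally flag, correctly, that miracle flatness requires $B$ to be regular and $T^*Bun_{\mathcal{G}_{X,x,\theta}}$ to be Cohen--Macaulay (rather than smooth of dimension $2d$, which is an overstatement --- only the inequality $\dim\ge 2d$ on each component is available a priori); the paper leaves both points implicit, and in its applications $B$ is a linear subspace so the regularity issue does not arise.
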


\begin{proof}[sketch of the proof]
	We only give a sketch of the proof, since we just have to follow the arguments  
	in \cite{G}[Proposition~1,Corollary~9, Theorem~10]. 
	We have $dim(T^*(Bun_{\mathcal{G}_{X,x,\theta}})) \geq 2 dim(Bun_{\mathcal{G}_{X,x,\theta}})$ as $Bun_{\mathcal{G}_{X,x,\theta}}$
	is an equidimensional stack (see \cite{BD}). The fact that
	$dim(B) = dim(Bun_{\mathcal{G}_{X,x,\theta}})$, tell us that
	any fiber of $\pi$ has dimension greater than or equal to 
	$dim(T^*(Bun_{\mathcal{G}_{X,x,\theta}})) - dim(B) \geq dim(Bun_{\mathcal{G}_{X,x,\theta}})$.
	But as we have from Proposition~\ref{prop:1},
	that $\mathcal{N}ilp_{X,x,\theta}$ is isotropic, we must have
	$dim(\mathcal{N}ilp_{X,x,\theta}) = dim(Bun_{\mathcal{G}_{X,x,\theta}})$
	and $\mathcal{N}ilp_{X,x,\theta}$
	is infact Lagrangian. We have $\mathcal{N}ilp_{X,x,\theta}$ is the
	fiber over $0$ of the Hitchin morphism. Using The natural $\mathbb{C}^{\times}$ action on the cotangent
	stack, we can put any fiber of $\chi$ in a family parametrized by
	$\mathbb{A}^1$, so that the central fiber is a substack of 
	$\mathcal{N}ilp_{X,x,\theta}$ and all other fibers have the same dimension.
	Thus we must have the dimension of the general fiber is less
	than that of the special fiber, which equals $dim(Bun_{\mathcal{G}_{X,x,\theta}})$. Hence 
	we get
	\[
	dim(\chi^{-1}(b)) = dim(Bun_{\mathcal{G}_{X,x,\theta}}), \,\ \forall b \in B.
	\]
	Hence $\chi$ is flat and surjective.

\end{proof}

\begin{coro}\label{coro:2}[case of parabolic bundles with full flag]
	For $\theta \in \mathfrak{U}^0$, such that $P_{\theta}$ is a Borel subgroupof $G$, we have the global nilpotent cone $\mathcal{N}ilp_{X,x,\theta}$ is Lagranigian.
\end{coro}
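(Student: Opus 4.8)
The plan is to apply Corollary~\ref{coro:1}: it suffices to produce a subvariety $B$ of the Hitchin base, of dimension equal to $\dim(Bun_{\mathcal{G}_{X,x,\theta}})$, through which $\chi$ factors. Since $P_{\theta}$ is a Borel subgroup, the relevant parahoric subalgebra is $\mathfrak{p} = ev^{-1}(\mathfrak{b})$ with $\mathfrak{b} = \mathrm{Lie}(P_{\theta})$ a Borel subalgebra, so Lemma~\ref{lem:1} applies with $\mathfrak{n} = \bigoplus_{\beta \in R^{+}}\mathfrak{g}_{\beta}$ the full nilradical and identifies the dual lattice as
\[
\widehat{\mathfrak{p}}^{\vee} = \frac{1}{z}(\mathfrak{n} + z\mathfrak{g}[[z]]).
\]
The crux is to bound the order of pole that each $F_i(\alpha)$ can acquire at $x$ for $\alpha \in \widehat{\mathfrak{p}}^{\vee}$.

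First I would establish the pole bound $\nu_z(F_i(\alpha)) \geq -(d_i - 1)$. Write $\alpha = \frac{1}{z}(n + z g(z))$ with $n \in \mathfrak{n}$ and $g(z) \in \mathfrak{g}[[z]]$, and set $\beta(z) = n + z g(z) \in \mathfrak{g}[[z]]$, so that $\beta(0) = n$. Since $F_i$ is homogeneous of degree $d_i$,
\[
F_i(\alpha) = z^{-d_i}\, F_i(\beta(z)).
\]
Now $F_i(\beta(z))$ lies in $\mathbb{C}[[z]]$, and its constant term is $F_i(\beta(0)) = F_i(n)$; as $n$ lies in the nilradical it is a nilpotent element of $\mathfrak{g}$, whence $F_i(n) = 0$ and $\nu_z(F_i(\beta(z))) \geq 1$. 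Therefore $\nu_z(F_i(\alpha)) \geq 1 - d_i$, as claimed. Translating this through the identification $\widehat{ad(E)}^{*}_x \cong \mathfrak{p}^{\vee}$ (with $K_X$ trivialized by $dz$ near $x$), for every $(E,s)$ the component $F_i(s)$ is a section of $K_X^{d_i}$ with pole of order at most $d_i - 1$ at $x$. Hence $\chi$ factors through the linear subspace
\[
B := \bigoplus_{i=1}^l H^{0}\bigl(X, K_X^{d_i}((d_i-1)x)\bigr) \subset \bigoplus_{i=1}^l H^{0}(X, K_X^{d_i}(Cx)).
\]

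It then remains to verify that $\dim B = \dim(Bun_{\mathcal{G}_{X,x,\theta}})$. Since $\deg K_X^{d_i}((d_i-1)x) = d_i(2g-2) + (d_i - 1) > 2g-2$ for every $i$ (recall $d_i \geq 2$), Riemann--Roch gives $h^0 = d_i(2g-1) - g$, so with $N := |R^{+}|$,
\[
\dim B = \sum_{i=1}^l \bigl(d_i(2g-1) - g\bigr) = (2g-1)\Bigl(\sum_{i=1}^l d_i\Bigr) - lg.
\]
On the other hand the full-flag moduli stack has dimension $\dim(Bun_G) + \dim(G/P_{\theta}) = (g-1)\dim G + N$. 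The plan is to reconcile the two using the standard identities $\sum_i d_i = N + l$ and $\dim G = 2N + l$, which reduce both expressions to $(2g-1)N + (g-1)l$. The least routine part of the argument is precisely this dimension match: it works only because the pole bound $d_i - 1$ is sharp, and it is sharpness that makes the exponent identities conspire. Once $\dim B = \dim(Bun_{\mathcal{G}_{X,x,\theta}})$ is confirmed, Corollary~\ref{coro:1} immediately gives that $\mathcal{N}ilp_{X,x,\theta}$ is Lagrangian.
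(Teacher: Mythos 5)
Your proposal is correct and follows essentially the same route as the paper: identify $\widehat{\mathfrak{p}}^{\vee} = \frac{1}{z}(\mathfrak{n}+z\mathfrak{g}[[z]])$ via Lemma~\ref{lem:1}, deduce the pole bound $\nu_z(F_i(\alpha))\geq -(d_i-1)$ from $F_i(n)=0$ for $n$ in the nilradical, and match $\dim\bigl(\bigoplus_i H^0(X,K_X^{d_i}((d_i-1)x))\bigr)$ with $\dim(Bun_{\mathcal{G}_{X,x,\theta}})$ by Riemann--Roch and the identity $\sum_i d_i = \dim(G/B)+l$. The only cosmetic difference is that you expand the dimension count through $\dim G = 2|R^+|+l$ rather than quoting $\sum_i(d_i-1)=\dim(G/B)$ directly; both reduce to the same computation.
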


\begin{proof}
From Corollary~\ref{coro:1}, it is enough to show that the Hitchin morphism
factors through a subvariety of dimension equal to that of $Bun_{\mathcal{G}_{X,x,\theta}}$. Now we have
\[
dim(Bun_{\mathcal{G}_{X,x,\theta}}) = dim(G)(g-1) + dim(G/B).
\]
where $B = P_{\theta}$.
On the otherhand, let $\widehat{\mathfrak{b}} = ev^{-1}(Lie(B))$.
Let $\mathfrak{n}$ be the nil-radical of $Lie(B)$. 
Let $E$ be a$\mathcal{G}_{X,x,\theta}$ torsors on $(X,x)$.
We have
from Lemma~\ref{lem:1},  and the above discussions
\[
\widehat{ad(E)}_x^{*} \simeq \widehat{\mathfrak{p}}^{\vee} = \frac {1}{z}(\mathfrak{n} + z\mathfrak{g}[[z]]).
\]
Let $\{f_1,\ldots,f_l\}$ be homogeneous invariants generators 
for $\mathbb{C}[\mathfrak{g}]^G$ and $deg(f_i) = d_i$.
Recall, we have 
\[
\Sigma_i d_i = dim(G/B) + l.
\]
For any $e \in \mathfrak{n}$, we have
\[
\nu_z(f_i(e + M)) \geq 1, \,\ \forall M \in z\mathfrak{g}[[z]], \forall j
\]
as $f_j(e) = 0$.
Thus we get
\[
\nu_z(f_i(\frac{1}{z}(e+ M))) \geq -(d_i -1), \,\ \forall i, \forall M \in z\mathfrak{g}[[z]]
\]
Thus we have the image of the Hitchin morphism is contained in the
subspace
\[
W = \bigoplus_i H^{0}(X,K_X^{d_i}(d_i-1))
\]
From Riemann-Roch theorem, we get
\[
dim(W) = dim(G)(g-1) + \Sigma_i (d_i-1) = dim(G)(g-1) + dim (G/B)
\]
\end{proof}

\section{some local computations}
In this section, we will study the local picture of the  Hitchin morphism in the case
of $\mathfrak{g}l_m$, for a particular choice of invariant genartors,
which are the various co-efficients of the characteristic polynomial.
The local study helps us conclude that, for a suitable choice of
invariant generators for $\mathfrak{s}p_{2n}$, the Hitchin morphism
for symplectic parabolic bundles, factors through a subvariety of
dimension same as that of the moduli stack. Thus from Corollary~\ref{coro:1},
we get the \textit{global nilpotent cone} is Lagrangian.
The combinatorial results and the other statements we derive in this
section, are a result of trying to understand the statements found
in \cite{meta}[pages~209-215]. \\

Let $e \in \mathfrak{g}l_m$ be a nilpotent element. We can
associate to the conjugacy class of $e$, a partition 
$\mu = (m_1,\ldots,m_k)$ of $m$. We denote by $\tilde{\mu} = (\tilde{m}_1,\ldots,\tilde{m}_r)$, the dual partition of $m$.
Now assume $m=2n$ and $e \in \mathfrak{s}p_{2n} \subset \mathfrak{g}l_{2n}$. 
It is a well-known fact that we have, the corresponding partition $\mu$, satisfies the property

\[
\text{in $\mu$ every odd number occurs with even multiplicity }
\]
Further we have
\[
dim(Z_{SP_{2n}}(e)) = \frac{1}{2}(\Sigma_i \tilde{m}_i^2 + \#\{i \mid m_i \,\ \text{is odd} \})
\]
Now for a number $1 \leq j \leq m$, we define
\[
n(\mu,j) = a; \,\ \text{where $a$ is the unique number such that} \,\ m_1 + \cdots + m_{a-1} < j \leq
m_1 + \cdots + m_a.
\]
We denote by $F_j \in \mathbb{C}[\mathfrak{g}l_m]^{GL_m}$, defined by
\[
det(Q - TId_{m\times m}) = T^m + F_1(Q)T^{m-1} + \ldots + F_m(Q).
\]
Following Khazdan and Lusztig (\cite{KL}), we say a subset $Y \subset \mathfrak{g}[[z]]
$ is constructible, if for some integer $N$, we have
a constructible subset $Y_l$ of the affine space $\mathfrak{g}[[t]]/t^l \mathfrak{g}[[t]]$, such that under the natural map
\[
f_l : \mathfrak{g}[[t]] \rightarrow \mathfrak{g}[[t]]/t^l\mathfrak{g}[[t]]
\]
we have
\[
Y = f_l^{-1}(Y_l).
\]
A subset $U \subset Y$, where $Y$ is constructible as above is called
open, if for some $l_1 > l$, we have $ \tilde{U} \subset (p^{l_1}_l)(Y_l)$
open and $U = p_{l_1}^{-1}(\tilde{U})$, where
\[
p^{l_1}_l : \mathfrak{g}[[t]]/t^{l_1}\mathfrak{g}[[t]] \rightarrow
\mathfrak{g}[[t]]/t^l\mathfrak{g}[[t]].
\]
is the natural map.
Now we claim the following
\begin{lem}\label{lem:2}
	Consider the subset
	$\mathcal{I}^j_e \subset e + z\mathfrak{g}l_m[[z]]$, given by
	\[
	\mathcal{I}^j_e = \{\gamma \in e + z\mathfrak{g}l_m[[z]] \mid \nu_t(\gamma)
	\,\ \text{is the minimum} \}
	\] 
	Then $\mathcal{I}^j_e$ is open in $e + z\mathfrak{g}[[z]]$.
\end{lem}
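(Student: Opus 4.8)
I read the set $\mathcal{I}^j_e$ as the locus inside the coset $e + z\mathfrak{g}l_m[[z]]$ on which the valuation $\nu_z\bigl(F_j(\gamma)\bigr)$ attains its minimum (the literal symbol $\nu_t(\gamma)$ must stand for $\nu_z(F_j(\gamma))$, since $\nu_z(\gamma)=0$ identically when $e\neq 0$). The plan is to recognise this locus as the non-vanishing set of a single leading coefficient, which is open in the Kazhdan--Lusztig sense recalled above. First I would write $\gamma = e + zM$ with $M \in \mathfrak{g}l_m[[z]]$, identifying the coset with the pro-affine space $\mathfrak{g}l_m[[z]] = \varprojlim_l \mathfrak{g}l_m[[z]]/z^l\mathfrak{g}l_m[[z]]$. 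Since each $F_j$ is a homogeneous polynomial of degree $j$ on $\mathfrak{g}l_m$ and substitution of $\gamma$ is continuous for the $z$-adic topology, the expansion
\[
F_j(\gamma) = \sum_{k \geq 0} c_k(\gamma)\, z^k \in \mathbb{C}[[z]]
\]
has the property that each coefficient $c_k$ is a polynomial function of the finitely many matrices $M_0,\dots,M_{k-1}$; equivalently $c_k$ factors through the truncation $f_k : \mathfrak{g}l_m[[z]] \to \mathfrak{g}l_m[[z]]/z^k\mathfrak{g}l_m[[z]]$.

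Next I would check that $F_j$ does not vanish identically on the coset, so that the minimal valuation is a finite integer. Evaluating at the particular element $\gamma = e + zI$, whose characteristic polynomial is $(T-z)^m$, shows that $F_j(e+zI)$ is a nonzero scalar multiple of $z^j$. Hence $d := \min_{\gamma}\nu_z\bigl(F_j(\gamma)\bigr)$ is a well-defined integer with $1 \leq d \leq j$, the lower bound holding because $c_0 = F_j(e) = 0$ as $e$ is nilpotent and $j \geq 1$. In the degenerate case where one allows $F_j \equiv 0$ on the coset, every point attains the minimal value $+\infty$ and $\mathcal{I}^j_e$ is the whole coset, which is trivially open; so that case is harmless and may be set aside.

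By the very definition of $d$, the coefficient functions satisfy $c_k \equiv 0$ for all $k < d$ while $c_d \not\equiv 0$. Therefore $\nu_z(F_j(\gamma)) = d$ holds exactly when $c_d(\gamma) \neq 0$, i.e.
\[
\mathcal{I}^j_e = \{\, \gamma \in e + z\mathfrak{g}l_m[[z]] \mid c_d(\gamma) \neq 0 \,\}.
\]
Now $c_d$ is a regular function on the finite-dimensional affine space $\mathfrak{g}l_m[[z]]/z^d\mathfrak{g}l_m[[z]]$, so its non-vanishing locus is Zariski-open there. Setting $l_1 = d+1 > 1$, the coset $e + z\mathfrak{g}l_m[[z]]$ is $f_1^{-1}(\{e\})$ at level $l = 1$, the set $\tilde{U} = \{c_d \neq 0\} \cap (p^{l_1}_1)^{-1}(\{e\})$ is open in the image $(p^{l_1}_1)^{-1}(\{e\})$ of the coset at level $l_1$, and $\mathcal{I}^j_e = f_{l_1}^{-1}(\tilde{U})$. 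This is precisely the openness condition of Kazhdan--Lusztig, which proves the lemma.

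The statement is at bottom a lower-semicontinuity assertion, so I expect no serious obstacle; the two points that must be handled with care are (a) the fact that each coefficient $c_k$ depends on only finitely many of the coordinates $M_0, M_1, \dots$, which is exactly where the $z$-adic continuity of the polynomial $F_j$ enters, and (b) the index bookkeeping needed to match the Kazhdan--Lusztig definition of an open subset, namely that $\{c_d \neq 0\}$ is cut out at the single finite level $l_1 = d+1$. The genuinely substantive computation, the value of $d$ in terms of $n(\mu,j)$ and the partition $\mu$ attached to $e$, is not required for openness and is left to the combinatorial analysis that follows.
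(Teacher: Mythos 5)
Your proof is correct and takes essentially the same route as the paper: both arguments reduce openness to the fact that the condition $\nu_z(F_j(\gamma))\le k$ (for $k=d$ the minimal value, equivalently the non-vanishing of the leading coefficient $c_d$) is detected on the finite-dimensional truncation $\mathfrak{g}l_m[[z]]/z^{k+1}\mathfrak{g}l_m[[z]]$, where it is the complement of a polynomial vanishing locus, hence open in the Kazhdan--Lusztig sense. Your explicit identification of $\mathcal{I}^j_e$ with $\{c_d\neq 0\}$ and the finiteness check via $e+zI$ are minor refinements of the same idea, not a different method.
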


\begin{proof}
	Let $k \in \mathbb{N}$. It is enough to prove the following
	subsets are open
	\[
	\mathcal{I}^{j,k}_e := \{\gamma \in e + z\mathfrak{g}[[z]] \mid
	\nu_t(F_j(\gamma)) \leq k \}
	\]
	To see this, consider the induced morphism of varieties
	\[
	F_j^k : \mathfrak{g}[[z]]/z^{k+1}\mathfrak{g}[[z]] \rightarrow
	\mathbb{C}[[z]]/z^{k+1}\mathbb{C}[[z]].
	\]
	Let $f_{k+1} : \mathfrak{g}[[z]] \rightarrow \mathfrak{g}[[z]]/z^{k+1}\mathfrak{g}[[z]]$ be the natural map.
	which is surjective. Now we have
	\[
	F_j^k(f_{k+1}(\gamma)) \neq 0 \,\ \iff \nu_t(F_j(\gamma)) \leq k
	\]
	Thus we get
	\[
	\mathcal{I}_e^{j,k} = f_{k+1}^{-1} ((f^{k+1}_0)^{-1}(\{e\}) \cap
	(F^k_j)^{-1}(0)
	\]
	and hence is open.
\end{proof}

We then have following proposition
\begin{prop}\label{prop:2}
	Let $\gamma \in e + z\mathfrak{g}l_m[[z]]$. Then we have
\[
\nu_t(F_j(\gamma)) \geq n(\mu,j)
\]
\end{prop}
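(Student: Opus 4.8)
The plan is to reduce the valuation estimate to a purely combinatorial statement about the ranks of principal submatrices of a Jordan nilpotent, and then to settle that statement by a greedy packing argument. Since each $F_j$ is a conjugation–invariant function on $\mathfrak{g}l_m$, and conjugation by a constant matrix $g \in GL_m(\mathbb{C})$ carries $e + z\mathfrak{g}l_m[[z]]$ into $(geg^{-1}) + z\mathfrak{g}l_m[[z]]$ while leaving $F_j(\gamma)$ unchanged, I would first assume $e$ is in Jordan normal form, say $e = \bigoplus_{i=1}^k N_{m_i}$, where $N_p$ is the $p\times p$ nilpotent Jordan block and $\mu=(m_1,\dots,m_k)$ is taken in the usual decreasing order $m_1\ge\cdots\ge m_k$. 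Next I would invoke the classical identity expressing the coefficients of the characteristic polynomial as sums of principal minors: up to sign, $F_j(\gamma)=\sum_{|I|=j}\det(\gamma_I)$, where $I$ ranges over $j$-element subsets of $\{1,\dots,m\}$ and $\gamma_I$ is the corresponding principal submatrix. Because the $z$-adic valuation of a sum is at least the minimum of the valuations of the summands, it suffices to prove $\nu_t(\det(\gamma_I))\ge n(\mu,j)$ for every such $I$.

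For a fixed $I$ with $|I|=j$, write $\gamma_I = e_I + z\beta_I$, a matrix over the discrete valuation ring $\mathbb{C}[[z]]$ whose reduction modulo $z$ is the principal submatrix $e_I$ of $e$. The key local estimate I would establish is
\[
\nu_t\bigl(\det(e_I + z\beta_I)\bigr) \ge j - \operatorname{rank}(e_I).
\]
This follows from the theory of elementary divisors over the PID $\mathbb{C}[[z]]$: writing $\gamma_I = U\,\operatorname{diag}(z^{d_1},\dots,z^{d_j})\,V$ with $U,V\in GL_j(\mathbb{C}[[z]])$ and $0\le d_1\le\cdots\le d_j$, reduction modulo $z$ exhibits $e_I$ as the product of $\operatorname{diag}(\overline{z^{d_l}})$ with two invertible constant matrices, so $\operatorname{rank}(e_I)=\#\{l : d_l=0\}$. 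Hence at least $j-\operatorname{rank}(e_I)$ of the exponents are positive and $\nu_t(\det\gamma_I)=\sum_l d_l \ge j-\operatorname{rank}(e_I)$. (If $\gamma_I$ is singular over $\mathbb{C}((z))$ the determinant vanishes and the bound is vacuous.)

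It then remains to prove the combinatorial inequality $\operatorname{rank}(e_I)\le j-n(\mu,j)$ for all $|I|=j$, i.e. $\max_{|I|=j}\operatorname{rank}(e_I)=j-n(\mu,j)$. Since $e$ is block diagonal, $e_I=\bigoplus_i (N_{m_i})_{I_i}$ with $I_i=I\cap(\text{block } i)$, so $\operatorname{rank}(e_I)=\sum_i\operatorname{rank}\bigl((N_{m_i})_{I_i}\bigr)$. A direct inspection shows that a principal submatrix of a single Jordan block $N_p$ on an index set of size $s$ is a partial permutation matrix whose rank equals the number of consecutive pairs in the index set, hence is at most $s-1$, with equality exactly when the index set is a run of consecutive integers. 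Setting $s_i=|I_i|$, this gives $\operatorname{rank}(e_I)\le\sum_{i:\,s_i\ge 1}(s_i-1)=j-\#\{i : s_i\ge 1\}$, so maximizing the rank is the same as minimizing the number of nonempty blocks used to accommodate $j$ indices subject to $s_i\le m_i$.

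The main point, and the step I expect to demand the most care, is this last optimization. Because the parts $m_i$ are ordered decreasingly, the minimum number of blocks is obtained greedily by filling the largest blocks first, and this minimum equals the least $a$ with $m_1+\cdots+m_a\ge j$, which is exactly $n(\mu,j)$ by definition. Therefore $\max_{|I|=j}\operatorname{rank}(e_I)=j-n(\mu,j)$, so $\nu_t(\det\gamma_I)\ge n(\mu,j)$ for every $I$ and consequently $\nu_t(F_j(\gamma))\ge n(\mu,j)$, as required. I would stress that the decreasing ordering of $\mu$ is precisely what makes the greedy count coincide with $n(\mu,j)$, and that the run construction in each block shows the estimate is sharp for generic $\gamma$, which is consistent with the openness statement of Lemma~\ref{lem:2}.
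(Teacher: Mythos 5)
Your argument is correct, and it takes a genuinely different route from the paper. The paper also reduces to $e$ in Jordan form, but then works through the Levi $\mathfrak{l}\cong\prod_i\mathfrak{gl}_{m_i}$: it factors the characteristic polynomial to write $F_j|_{\mathfrak{l}}$ as a sum of products of at least $n(\mu,j)$ block invariants $F_{i_tj_t}$, each of positive valuation on $e+z\mathfrak{l}[[z]]$, and then extends the bound from $e+z\mathfrak{l}[[z]]$ to all of $e+z\mathfrak{gl}_m[[z]]$ by a density argument: it passes through the Kazhdan--Lusztig open locus of regular semisimple elements, uses a Kostant slice to move the computation back into $e+z\mathfrak{l}[[z]]$, and invokes the openness statement of Lemma~\ref{lem:2} together with irreducibility of $e+z\mathfrak{g}[[z]]$ to conclude. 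You instead expand $F_j(\gamma)$ as $\pm\sum_{|I|=j}\det(\gamma_I)$ over principal minors, bound each term by $\nu_t(\det\gamma_I)\ge j-\operatorname{rank}(e_I)$ via the Smith normal form over $\mathbb{C}[[z]]$, and reduce everything to the combinatorial identity $\max_{|I|=j}\operatorname{rank}(e_I)=j-n(\mu,j)$, which your block-by-block count of consecutive pairs and greedy packing establish correctly (the decreasing order of $\mu$ is indeed the point, since any $a$ parts sum to at most $m_1+\cdots+m_a$). Your proof is more elementary and in a sense stronger in form --- it gives the valuation bound pointwise for every $\gamma$ rather than on a dense open set, and it makes Lemma~\ref{lem:2} unnecessary for this proposition --- at the cost of being tied to $\mathfrak{gl}_m$ and the specific invariants $F_j$ coming from the characteristic polynomial; the paper's Kostant-slice mechanism is the piece that would generalize to other choices of invariant generators. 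Your closing remark that the run construction shows sharpness for generic $\gamma$ is plausible but not fully justified as stated (one must still rule out cancellation among the minors achieving the minimal valuation); since it is an aside and not needed for the inequality, this does not affect the proof.
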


\begin{proof}
We can assume with no loss of generality that $e$ is in the jordan form,
with jordan blocks of sizes $m_i$ along the diagonal. Let $\mathfrak{l}$
be the sub-algebra of $gl_m$, consisting of blocks of sizes $m_i$
along the diagonal. We then have
\[
\mathfrak{l} \cong \mathfrak{g}l_{m_1} \times \cdots \times \mathfrak{g}l_{m_r}
\]	
and $e \in \mathfrak{l}$ is regular nilpotent in $\mathfrak{l}$.
Let $e = e_1 +e_2 + \cdots + e_r$ be the jordan decomposition, 
and $\{F_{ij}\}_{j=1}^{m_i}$ be the invariants in $\mathbb{C}[\mathfrak{gl}_{m_i}]^{GL_{m_i}}$ given by the co-efficients
of characteristic polynomials.
For $ M \in \mathfrak{l}$, we think of $M$ as a tuple $(M_1,\ldots,M_r)$,
where $M_i \in \mathfrak{g}l_{m_i}$. Using the formula
\[
det(M - TId) = det(M_1 - TId) \cdots det(M_r - TId)
\]
we can express $F_j \mid_{\mathfrak{l}}$, as a sum of products
of the form $F_{i_1j_1}\cdots F_{i_hj_h}$, where 
$i_1 \neq i_2 \cdots \neq i_h$ and $j_1 + \cdots + j_{h} = j$.
The minimum number of such terms possible is $\mu(j)$ and thus
we get
\[
\nu_t(F_j(\gamma)) \geq n(\mu,j), \,\ \gamma \in e + z\mathfrak{l}[[z]]
\]
since we have $\nu_t(F_{ij}(\gamma)) \geq 1$, for $\gamma \in e + z\mathfrak{l}[[z]]$. 
Now we consider the open set of regular semisimple elements in $e + z\mathfrak{g}[[z]]$ as considered by Khazdan and Lusztig (\cite{KL}[pg~156])
which over $\overline{\mathbb{C}((t))}$ is conjugate to
a matrix of the form $D = diag(a^1_1,\cdots,a^1_{m_1},a^2_1,\cdots,a^r_{1},\cdots,a^r_{m_r})$,
where 
\[
a^i_{\lambda} \in z^{\frac {1}{m_i}}\mathbb{C}[[z^{\frac {1}{m_i}}]]
\]
We will denote this open set by $\mathcal{J}_e$.
Observe we have $D \in \mathfrak{l} \otimes \overline{\mathbb{C}((t))}$ and
$F_{ij}(D) \in z\mathbb{C}[[z]], \forall i,j$.
Choose an $\mathfrak{s}l_2$ triple $\{e,f,h\} \subset \mathfrak{l}$.
Let $V = \mathfrak{z}_{\mathfrak{l}}(f)$. Choose a basis 
$\{v_{ij}\}$
of $V$ consisting of eigenvectors for $t$. Now from a theorem of Kostant(\cite{Kos}),
we can find generators $\{Q_{ij}\}$ for the ring $\mathbb{C}[[\mathfrak{l}]]^L$,
such that we have
\[
Q_{ij}(e + \Sigma a_{pq}v_{pq}) = a_{ij}.
\]
Since we have $\{F_{ij}\}$ are generators for the invariant ring 
$\mathbb{C}[\mathfrak{l}]^L$ as well, we get
\[
Q_{ij}(D) = d_{ij} \in z\mathbb{C}[[z]].
\]
Thus we get
\[
Q_{ij}(D) = Q_{ij} (e + \Sigma d_{pq}v_{pq}).
\]
consequently we get
\[
F_j(\tilde{D}) = F_j(D) = F_j(e + \Sigma d_{pq}v_{pq}),
\]	
where $\tilde{D} \in \mathcal{J}_e$ is conjugate to $D$ over $\overline{\mathbb{C}((t))}$.
In particular, we get
\[
\nu_t(F_j(\tilde{D})) \geq n(\mu,j), \,\ \tilde{D} \in \mathcal{J}_e.
\]
Now to finish off the proof consider the open set $\mathcal{I}^j_e$
as in Lemma~\ref{lem:2}. Since $e \zeta\mathfrak{g}[[z]]$ is an irreducible
contsructible subset of $\mathfrak{g}[[z]]$, we have
\[
\mathcal{J}_e \cap \mathcal{I}^j_e \neq \emptyset.
\]
Thus we get the minimum of $v_t(F_j)$ attained in $e + z\mathfrak{g}[[z]]$
is atleast $\mu(j)$ and hence the lemma is proved.
\end{proof}

\begin{lem}[A combinatorial identity]\label{lem:3}
	We have
	\[
	\Sigma_i i m_i = \frac {1}{2}(\Sigma_j \tilde{m}_j^2 + \tilde{m}_j)
	\]
\end{lem}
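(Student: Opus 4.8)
The plan is to prove the identity by a double-counting argument on the Young diagram of the partition $\mu$. As is standard, assume the parts are ordered $m_1 \geq m_2 \geq \cdots \geq m_k$, and draw the Young diagram left-justified so that row $i$ consists of $m_i$ boxes. By the definition of the dual partition, the number of boxes in column $j$ is $\tilde{m}_j = \#\{ i \mid m_i \geq j \}$, and, precisely because the $m_i$ are weakly decreasing, these boxes are exactly the ones lying in rows $1, 2, \ldots, \tilde{m}_j$. This identification of columns with initial segments of rows is the combinatorial heart of the matter.

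First I would rewrite the left-hand side as a sum over the boxes of the diagram: since row $i$ contributes $m_i$ boxes, each carrying the row index $i$, we have
\[
\sum_i i\, m_i = \sum_b r(b),
\]
where the sum runs over all boxes $b$ of the diagram and $r(b)$ denotes the row index of $b$. Next I would reorganize this sum column by column. By the observation above, the boxes in column $j$ occupy rows $1$ through $\tilde{m}_j$, so their row indices contribute
\[
\sum_{r=1}^{\tilde{m}_j} r = \frac{\tilde{m}_j(\tilde{m}_j + 1)}{2} = \frac{1}{2}\left(\tilde{m}_j^2 + \tilde{m}_j\right).
\]
Summing over all columns $j$ then yields exactly the right-hand side $\frac{1}{2}\sum_j(\tilde{m}_j^2 + \tilde{m}_j)$, completing the proof.

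The argument is a pure bookkeeping identity, so I expect no genuine obstacle; the one point requiring care is the identification of column $j$ with rows $1,\ldots,\tilde{m}_j$, which relies on the parts being ordered decreasingly. Indeed, without this ordering convention the left-hand side $\sum_i i\, m_i$ is not even invariant under permuting the parts, whereas the right-hand side depends only on the multiset $\{m_i\}$ through its dual. A quick sanity check on a small example such as $\mu = (2,1)$, where both sides equal $4$, confirms simultaneously the identity and the necessity of this convention.
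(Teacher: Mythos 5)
Your argument is correct and is essentially identical to the paper's proof: both fill each box of row $i$ of the Young diagram with the number $i$ and compare the row-wise sum $\sum_i i\,m_i$ with the column-wise sum $\sum_j \tfrac{1}{2}(\tilde m_j^2+\tilde m_j)$. Your added remark on the necessity of the decreasing-order convention is a sensible clarification but does not change the method.
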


\begin{proof}
	Consider the young tableau corresponding to $\mu$.
	Enter the number $i$ on every box in the $i^{th}$ row of the tableau.
	Now summing the resulting set of numbers row-wise, we get
	the number $\Sigma_i im_i$. While summinng up the numbers, column wise,
	we get the number $\frac {1}{2}(\Sigma_j \tilde{m}_j^2 + \tilde{m}_j)$.
	Thus we get the required equality
	\[
	\Sigma_i i m_i = \frac {1}{2}(\Sigma_j \tilde{m}_j^2 + \tilde{m}_j).
	\]
\end{proof}

For $\mathfrak{g} = \mathfrak{s}p_{2n}$, the polynomial functions
$\{F_{2i}\mid_{\mathfrak{s}p_{2n}} \}_{i=1}^n$ is a minimal homogeneous
generating set for the ring $\mathbb{C}[\mathfrak{s}p_{2n}]^{Sp_{2n}}$.
We have the following corollary of the Lemma~\ref{lem:3}
\begin{coro}\label{coro:3}
Let $\mu$ be the partition corresponding to a nilpotent element
$e \in \mathfrak{s}p_{2n} \subset \mathfrak{g}l_{2n}$. Then we have
\[
\Sigma_{j=1}^n n(\mu,2j) = \frac {n}{2} + \frac {1}{2} (dim(Z_{Sp_{2n}}(e)))
\]
\end{coro}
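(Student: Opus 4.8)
The plan is to compute $\sum_{j=1}^{n} n(\mu,2j)$ by relating it to the \emph{full} sum $\sum_{j=1}^{2n} n(\mu,j)$, which is already pinned down by Lemma~\ref{lem:3}. Writing $P_a = m_1 + \cdots + m_a$ for the partial sums of $\mu = (m_1,\ldots,m_k)$ arranged in decreasing order, the definition of $n(\mu,j)$ gives immediately $\sum_{j=1}^{2n} n(\mu,j) = \sum_i i\,m_i$, since the value $a$ is attained exactly $m_a$ times. Set
\[
S = \sum_{j=1}^{n} n(\mu,2j), \qquad T = \sum_{j=1}^{n} n(\mu,2j-1),
\]
so that $S+T = \sum_i i\,m_i = \tfrac{1}{2}\sum_j(\tilde{m}_j^2 + \tilde{m}_j)$ by Lemma~\ref{lem:3}, and, using $\sum_j \tilde{m}_j = 2n$, this reads $S+T = \tfrac{1}{2}\sum_j \tilde{m}_j^2 + n$. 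The whole computation then reduces to evaluating the difference $S-T$.

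First I would observe that $n(\mu,2j)-n(\mu,2j-1)$ equals $1$ exactly when a part-boundary of $\mu$ falls strictly between the consecutive positions $2j-1$ and $2j$, and is $0$ otherwise; such a boundary occurs precisely when $2j-1 = P_a$ for some $a$, i.e. when $P_a$ is odd (and then automatically $P_a < 2n$, as $P_k = 2n$ is even). Hence
\[
S - T = \#\{a : P_a \text{ is odd}\}.
\]
The heart of the argument, and the step I expect to be the main obstacle, is to show that this count equals $\tfrac{1}{2}\#\{i : m_i \text{ is odd}\}$. Here the symplectic hypothesis is essential: every odd value occurs in $\mu$ with even multiplicity, so in the decreasing arrangement the odd parts form consecutive runs of even length. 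Since $P_0 = 0$ is even and each such run contributes an even number of parity-flips, the baseline parity stays even across every run and at every even part; consequently the partial sums sitting at even parts are all even, and within a run of $2s$ odd parts the partial sums alternate even/odd, giving exactly $s$ odd ones. Summing over the runs yields $\#\{a : P_a \text{ odd}\} = \tfrac{1}{2}\#\{i : m_i \text{ odd}\}$. A genuinely non-symplectic partition such as $(5,4,1)$ already violates this identity, so this is exactly where the even-multiplicity condition must be used.

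Finally I would assemble the pieces. Adding the two expressions gives
\[
2S = (S+T)+(S-T) = \tfrac{1}{2}\sum_j \tilde{m}_j^2 + n + \tfrac{1}{2}\#\{i : m_i \text{ odd}\},
\]
so that $S = \tfrac{n}{2} + \tfrac{1}{4}\sum_j \tilde{m}_j^2 + \tfrac{1}{4}\#\{i : m_i \text{ odd}\}$. Comparing with the stated formula $\dim Z_{Sp_{2n}}(e) = \tfrac{1}{2}\bigl(\sum_i \tilde{m}_i^2 + \#\{i : m_i \text{ odd}\}\bigr)$, the last two terms are precisely $\tfrac{1}{2}\dim Z_{Sp_{2n}}(e)$, which yields $\sum_{j=1}^n n(\mu,2j) = \tfrac{n}{2} + \tfrac{1}{2}\dim Z_{Sp_{2n}}(e)$ as required. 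As a sanity check I would verify the bookkeeping against the extreme cases $\mu=(2)$ and $\mu=(1,1)$ for $Sp_2$, where both sides evaluate to $1$ and $2$ respectively.
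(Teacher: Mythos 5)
Your proof is correct, and it gets to the answer by a genuinely different decomposition than the paper, even though both ultimately rest on the same two ingredients: Lemma~\ref{lem:3} and the fact that the even-multiplicity condition forces $\#\{a : m_1+\cdots+m_a \ \text{odd}\} = \tfrac{1}{2}\#\{i : m_i \ \text{odd}\}$. The paper computes, for each index $a$, the fiber count $\#C_a = \#\{j \le n : n(\mu,2j)=a\}$ (equivalently $\lfloor P_a/2\rfloor - \lfloor P_{a-1}/2\rfloor$) via a three-way case analysis on the parities of $m_a$ and of $P_{a-1}=m_1+\cdots+m_{a-1}$, and then recovers the correction term $\tfrac{1}{4}\#C^o$ by pairing consecutive indices carrying odd parts. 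You instead introduce $T=\sum_{j=1}^n n(\mu,2j-1)$, note that $S+T=\sum_i i\,m_i$ is handled wholesale by Lemma~\ref{lem:3}, and identify $S-T$ with the number of odd partial sums $P_a$ via the observation that $n(\mu,\cdot)$ jumps by exactly one across a part boundary (which uses that all parts are positive, so the jump cannot exceed one). The two computations are algebraically equivalent --- your $S-T$ is precisely the telescoped sum of the paper's $\pm\tfrac{1}{2}$ corrections to $\#C_a = m_a/2$ --- but your organization avoids the case analysis entirely and isolates the use of the symplectic hypothesis in a single clean parity statement about runs of odd parts, which you correctly note fails for non-symplectic partitions such as $(5,4,1)$. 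The paper's version yields the individual counts $\#C_a$ as a byproduct, which is slightly more refined information, but for the stated identity your route is the more transparent one.
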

\begin{proof}
	We have
	\[
	n(\mu,2j) = a, \,\ m_1 + \cdots + m_{a-1} < 2j \leq m_1 + \cdots + m_a.
	\]
	Thus we have
	\[
	\lfloor m_1 + \cdots + m_{a-1} \rfloor + 1 \leq j \leq \lfloor m_1 + \cdots + m_a \rfloor.
	\]
	Thus consider the subset of $C_a \subset \{1,\ldots,n\}$, given by
	\[
	C_a = \{j \mid \mu(j) = a \}
	\] 
	We have 
	\[
	\# C_a = \begin{cases}
	\frac {m_a}{2} & if \,\ m_a \,\ \text {is even}\\
	\frac {m_a-1}{2} & if \,\ \text{ $m_a$ is odd and $m_1 + \cdots + m_{a-1}$ is even}\\
	\frac{m_a+1}{2} & if \,\ \text{$m_a$ is odd and $m_1 + \cdots + m_{a-1}$ is
		odd}
	\end{cases}
	\]
	We have now
	\[
	\Sigma_{j=1}^n n(\mu,2j) = \Sigma_{a=1}^r a \# C_a.
	\]
	Now since $\mu$ corresponds to a nilpotent orbt in $\mathfrak{s}p_{2n}$,
	recall we have odd numbers occurs with even multiplicity in $\mu$.
	Now consider the set $C^{o} = \{a \mid \text {$m_a$ is odd}\}$.
	Arrange the numbers in $C^{o}$ in increasing order and let them be
	$\{a_1,\cdots,a_{2q}\}$. We then have
	\[
	a_{i} = a_{i+1} + 1 , \,\ i=1,3,\ldots,2q-1.
	\]
	Thus we get
	that
	\begin{eqnarray*}
	\Sigma_{a \in C^0} a \# C_a & = & \Sigma_{a \in C^o} \frac {am_a}{2} +
	\frac{1}{2}(-a_1 + a_2 - a_3 + \cdots ) \\
	                            & = & \Sigma_{a \in C^o} \frac {am_a}{2} +
	\frac {1}{2} (\frac{1}{2}\#C^o)                            
	\end{eqnarray*}
\end{proof}
Thus we have
\begin{eqnarray*}
\Sigma_{j=1}^n \mu(2j) & = & \Sigma_{a \notin C^0} a\#C_a + \Sigma_{a \in C^o} a\#C_a.\\
                       & = & \frac{1}{2}(\Sigma_a am_a + \frac {1}{2}\#C^o)\\
                       & = & \frac {1}{2} (\frac {1}{2}(\Sigma_k \tilde{m}_k^2 + \#C^o) + n) \\
                       & = & \frac {n}{2} + \frac {1}{2} (dim(Z_{Sp_{2n}}(e)))
\end{eqnarray*}

\begin{rem}
	For $\mathfrak{g} = \mathfrak{s}o_{2n+1} \subset \mathfrak{g}l_{2n+1}$.
	We have $\{F_{2i}\mid_{\mathfrak{g}} \}_{i=1}^{n}$ is a generating set
	for $\mathbb{C}[\mathfrak{g}]^{So_{2n+1}}$. But the analogue of
	Corollary~\ref{coro:3} fails in this case, as can be seen by the following example:\\
	Let $\mathfrak{g} = \mathfrak{s}o(5)$ and $e \in \mathfrak{g}$, whose
	associated partition of $5$ is $\mu = (2,2,1)$. 
	The dual partition
	of $\mu$ is $(3,2)$. 
	We have two invariants
	$F_{2}$ and $F_{4}$ and 
	\[
	n(\mu,2) + n(\mu,4) = 1 + 2 = 3.
	\]
	But on the otherhand, we have
	\[
	\frac {1}{2}(n + dim(Z_{\mathfrak{g}}(e))) = \frac{1}{2}(2 + 3^2 + 2^2 - 1) = 4.
	\]

\end{rem}

\section{Hitchin morphism for symplectic parabolic bundles}
Let $\mathcal{G}_{X,x,\theta}$ be a Bruhat-Tits group scheme over $(X,x)$
for the symplectic group $Sp_{2n}$ and $\theta$ in the interior of the
rational weyl alcove. Let $P_{\theta}$ be the parabolic subgroup of $Sp_{2n}$,
determined by $\theta$. Denote by $\mathfrak{p}_{\theta}$, the lie algebra
of $P_{\theta}$. Let $\mathfrak{n}_{\theta}$ be the nil radical  of
$\mathfrak{p}_{\theta}$.

We then have
\[
\widehat{\mathfrak{p}}_{\theta} := \widehat{Lie(\mathcal{G}_{X,x,\theta})}\mid_x = ev^{-1}(\mathfrak{p}_{\theta})
\]
where we identify the completed local ring $\widehat{\mathcal{O}}_x$
with $\mathbb{C}[[z]]$ and $ev : \mathfrak{s}p_{2n}[[z]] \rightarrow \mathfrak{s}p_{2n}$ is the natural map.
From Lemma we have
\[
\widehat{\mathfrak{p}}_{\theta}^{\vee} = \frac{1}{z}(\mathfrak{n}_{\theta} + z\mathfrak{g}[[z]])
\]
As in the previous section, we denote by $F_j \in \mathbb{C}[\mathfrak{s}p_{2n}]^{Sp_{2n}}$, the invariants defined
by the equation
\[
det(M - TId) = \Sigma_{j=0}^{2n} F_j(M)T^{2n-j}
\]
It is a well-known fact that, we have $F_j = 0, \,\ \text{for $j$ odd}$
and $\{F_{2j}\}_{j=1}^n$ generates the algebra of invariants.
Let $O_{\theta} \in \mathfrak \mathfrak{n}_{\theta}$, be the dense open
$P_{\theta}$ orbit corresponding to the Richardson class. Let
the corresponding partition of $2n$ be $\Lambda = (\lambda_1,\ldots,\lambda_r)$.
\begin{lem}\label{lem:4}
	We have \[
	\nu_t(F_{2j}(\gamma)) \geq -(2j - n(\Lambda,2j)), \,\ \forall \gamma \in \widehat{\mathfrak{p}_{\theta}}^{\vee}.	
	\]
\end{lem}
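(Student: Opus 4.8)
The plan is to reduce the statement to Proposition~\ref{prop:2} by exploiting the homogeneity of the invariants $F_{2j}$, and then to make the resulting combinatorial bound uniform over the whole lattice $\widehat{\mathfrak{p}_\theta}^{\vee}$ by means of the orbit-closure (dominance) order on partitions.

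First I would write an arbitrary element of $\widehat{\mathfrak{p}_\theta}^{\vee} = \frac{1}{z}(\mathfrak{n}_\theta + z\mathfrak{g}[[z]])$ as $\gamma = \frac{1}{z}(e + zM)$ with $e \in \mathfrak{n}_\theta$ nilpotent and $M \in \mathfrak{g}[[z]]$. Since $F_{2j}$ is homogeneous of degree $2j$, we get $F_{2j}(\gamma) = z^{-2j}F_{2j}(e+zM)$, whence $\nu_t(F_{2j}(\gamma)) = -2j + \nu_t(F_{2j}(e+zM))$. Now $e+zM$ lies in $e + z\mathfrak{g}l_{2n}[[z]]$, because $\mathfrak{g} = \mathfrak{s}p_{2n} \subset \mathfrak{g}l_{2n}$ and $F_{2j}$ is the restriction of the corresponding coefficient of the characteristic polynomial on $\mathfrak{g}l_{2n}$, so Proposition~\ref{prop:2} applies and yields
\[
\nu_t(F_{2j}(e+zM)) \geq n(\mu_e, 2j),
\]
where $\mu_e$ denotes the Jordan partition of the nilpotent $e \in \mathfrak{g}l_{2n}$. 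Combining, $\nu_t(F_{2j}(\gamma)) \geq -2j + n(\mu_e, 2j)$.

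It then remains to show that $n(\mu_e, 2j) \geq n(\Lambda, 2j)$ for every $e \in \mathfrak{n}_\theta$, since this gives $\nu_t(F_{2j}(\gamma)) \geq -2j + n(\Lambda, 2j) = -(2j - n(\Lambda,2j))$. I would do this in two steps. First, since $O_\theta$ is dense in the irreducible closed subset $\mathfrak{n}_\theta$, we have $\mathfrak{n}_\theta = \overline{O_\theta}$, and as the elements of $O_\theta$ all have Jordan type $\Lambda$ we get $\overline{O_\theta} \subset \overline{\mathcal{O}_\Lambda}$, where $\mathcal{O}_\Lambda$ is the $GL_{2n}$-orbit with partition $\Lambda$; hence $e \in \overline{\mathcal{O}_\Lambda}$, and by the classical description of closures of nilpotent $GL_{2n}$-orbits this forces $\mu_e \leq \Lambda$ in the dominance order, i.e.\ $\sum_{i \leq k}(\mu_e)_i \leq \sum_{i \leq k}\lambda_i$ for all $k$. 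Second, $n(\mu,j) = \min\{a : \sum_{i \leq a} m_i \geq j\}$ is antitone for the dominance order: larger partial sums can only lower the first index at which the threshold $j$ is reached, so $\mu_e \leq \Lambda$ indeed gives $n(\mu_e, 2j) \geq n(\Lambda, 2j)$.

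The main obstacle is precisely this uniformity over the whole lattice: Proposition~\ref{prop:2} is tailored to a single nilpotent $e$ with its own partition, whereas $\widehat{\mathfrak{p}_\theta}^{\vee}$ sweeps out all of $\mathfrak{n}_\theta$ and the desired bound is stated in terms of the single generic partition $\Lambda$. The dominance comparison is exactly what singles out the generic $e \in O_\theta$ as the worst case, so that non-generic nilpotents satisfy an even stronger inequality. The two points needing care are the antitonicity of $n(\cdot, 2j)$ and the identification of the orbit-closure order with dominance, both of which are standard once set up.
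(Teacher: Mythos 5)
Your proposal is correct and follows essentially the same route as the paper: factor out $z^{-2j}$ by homogeneity, apply Proposition~\ref{prop:2} to $e+zM$, and then pass from the partition of $e$ to the Richardson partition $\Lambda$ via the dominance order coming from $e \in \overline{O_\theta}$. You in fact supply more detail than the paper does on the two combinatorial points (orbit-closure order equals dominance, and antitonicity of $n(\cdot,2j)$), which the paper simply asserts.
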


\begin{proof}
Let $\gamma \in \widehat{\mathfrak{p}}$. We then have
\[
\gamma = \frac {1}{z}(e + z\tilde{\gamma}), \,\ e \in \mathfrak{n}_{\theta}
\]
Thus we have
\[
\nu_z(F_{2j}(\gamma)) = -2j + \nu_z(F_{2j}(e + z \tilde{\gamma}))
\]
Let $\mu = (m_1,\ldots,m_s)$ be the partition of $2n$ corresponding to $e$.
We have from Proposition~\ref{prop:A}, that
\[
\nu_z(F_{2j}(\gamma)) \geq -2j + n(\mu,2j) = -(2j - n(\mu,2j))
\]
Now since we $e \in \overline{O}_{\theta}$, we have
for any $k$,
\[
\Sigma_{i=1}^k m_i \leq \Sigma_{i=1}^k \lambda_i.
\]
Thus we see that
\[
n(\mu,2j) \geq n(\Lambda,2j).
\]
Hence we get
\[
\nu_z(F_{2j}(\gamma)) \geq -(2j-n(\mu,2j)) \geq -(2j - n(\Lambda,2j))
\]
\end{proof}

\begin{coro}
	Let $C >>0$ be such that, we have
	\[
	\nu_z(F_{2j}(\gamma)) \geq -C, \,\ \forall \gamma \in \widehat{\mathfrak{p}}_{\theta}, \,\ j=1,2,\cdots,n.
	\]
	Then image of the Hitchin morphism
	\[
	\chi : T^* Bun_{\mathcal{G}_{X,x,\theta}} \rightarrow \bigoplus 
	\bigoplus_{j=1}^n H^{0}(X,K_X^{2j}(Cx))
	\]
	factors through a subvariety of dimension equal to that of $Bun_{\mathcal{G}_{X,x,\theta}}$.
\end{coro}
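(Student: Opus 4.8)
The plan is to exhibit an explicit linear subspace $W$ of the Hitchin target through which $\chi$ factors, and then to verify, by Riemann--Roch together with the combinatorial identity of Corollary~\ref{coro:3}, that $dim(W) = dim(Bun_{\mathcal{G}_{X,x,\theta}})$; since a linear subspace is in particular a subvariety, this suffices. For a torsor $E$ with section $s$, the restriction $s\mid_{X-x}$ is a genuine section of $ad(E) \otimes K_X$, so each $F_{2j}(s)$ is an honest section of $K_X^{\otimes 2j}$ away from $x$ and the only issue is the order of pole at $x$. Choosing the coordinate $z$ so that $dz$ trivializes $K_X$ near $x$, the germ of $s$ is $\sigma\,(dz)$ with $\sigma$ lying in the lattice $\widehat{ad(E)}_x^{*} \simeq \widehat{\mathfrak{p}}_{\theta}^{\vee}$, so the local bound of Lemma~\ref{lem:4}, namely $\nu_z(F_{2j}(\sigma)) \geq -(2j - n(\Lambda,2j))$ with $\Lambda$ the Richardson partition of the dense orbit $O_{\theta} \subset \mathfrak{n}_{\theta}$, shows that $F_{2j}(s)$ extends to a section of $K_X^{\otimes 2j}((2j - n(\Lambda,2j))x)$. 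Hence the image of $\chi$ lies in
\[
W = \bigoplus_{j=1}^n H^{0}(X, K_X^{\otimes 2j}((2j - n(\Lambda,2j))x)) \subseteq \bigoplus_{j=1}^n H^{0}(X, K_X^{\otimes 2j}(Cx)).
\]

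Next I would compute $dim(W)$ by Riemann--Roch. For $g>1$ each bundle $K_X^{\otimes 2j}((2j - n(\Lambda,2j))x)$ has degree exceeding $2g-2$, so its $H^1$ vanishes and $dim\,H^{0} = (4j-1)(g-1) + (2j - n(\Lambda,2j))$. Summing over $j$ and using the identity $\Sigma_{j=1}^n (4j-1) = dim(Sp_{2n})$, I obtain
\[
dim(W) = dim(Sp_{2n})(g-1) + \Sigma_{j=1}^n \big(2j - n(\Lambda,2j)\big).
\]
On the other hand, exactly as in Corollary~\ref{coro:2}, one has $dim(Bun_{\mathcal{G}_{X,x,\theta}}) = dim(Sp_{2n})(g-1) + dim(G/P_{\theta})$, so it remains to match the polar contribution $\Sigma_j (2j - n(\Lambda,2j))$ with $dim(G/P_{\theta})$.

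To close this I would invoke Richardson's theorem: since $O_{\theta}$ is dense in $\mathfrak{n}_{\theta}$, one has $dim(O_{\theta}) = 2\,dim(\mathfrak{n}_{\theta}) = 2\,dim(G/P_{\theta})$, and therefore $dim(G/P_{\theta}) = \tfrac12\big(dim(Sp_{2n}) - dim(Z_{Sp_{2n}}(e_{\Lambda}))\big)$ for a Richardson representative $e_{\Lambda}$ of class $\Lambda$. Combining $\Sigma_{j=1}^n 2j = n^2 + n$ with Corollary~\ref{coro:3}, which gives $\Sigma_{j=1}^n n(\Lambda,2j) = \tfrac{n}{2} + \tfrac12\,dim(Z_{Sp_{2n}}(e_{\Lambda}))$, yields
\[
\Sigma_{j=1}^n \big(2j - n(\Lambda,2j)\big) = n^2 + \tfrac{n}{2} - \tfrac12\,dim(Z_{Sp_{2n}}(e_{\Lambda})) = \tfrac12\,dim(Sp_{2n}) - \tfrac12\,dim(Z_{Sp_{2n}}(e_{\Lambda})) = dim(G/P_{\theta}),
\]
using $\tfrac12\,dim(Sp_{2n}) = n^2 + \tfrac{n}{2}$. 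Hence $dim(W) = dim(Bun_{\mathcal{G}_{X,x,\theta}})$, as required.

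The genuinely delicate inputs are already packaged into the earlier results: the sharp polar estimate of Lemma~\ref{lem:4}, which forces one to use the Richardson partition $\Lambda$ rather than a cruder bound, and the combinatorial identity of Corollary~\ref{coro:3}. The one new point, and the step I would be most careful about, is recognizing that Richardson's theorem converts $dim(G/P_{\theta})$ into precisely the centralizer expression $\tfrac12\big(dim(Sp_{2n}) - dim(Z_{Sp_{2n}}(e_{\Lambda}))\big)$ that appears in Corollary~\ref{coro:3}, so that the Riemann--Roch count of $dim(W)$ matches the dimension of the moduli stack term by term. I would also verify the minor points that the relevant line bundles have degree $>2g-2$ (immediate for $g>1$, so that $H^1$ vanishes) and that $\Lambda$ is indeed the partition attached to the dense orbit $O_{\theta}$.
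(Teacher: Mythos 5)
Your proof is correct and follows essentially the same route as the paper: both identify the image of $\chi$ as landing in $W=\bigoplus_{j}H^{0}(X,K_X^{\otimes 2j}((2j-n(\Lambda,2j))x))$ via Lemma~\ref{lem:4}, compute $\dim(W)$ by Riemann--Roch, and match it to $\dim(Bun_{\mathcal{G}_{X,x,\theta}})$ using Corollary~\ref{coro:3} together with Richardson's theorem. You merely make explicit several steps the paper leaves implicit (the vanishing of $H^1$, the identity $\Sigma_j(4j-1)=\dim Sp_{2n}$, and the Richardson identification $\dim(G/P_{\theta})=\tfrac12(\dim Sp_{2n}-\dim Z_{Sp_{2n}}(e_{\Lambda}))$), all of which check out.
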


\begin{proof}
	From Lemma~\ref{lem:4}, we have $C \leq min\{2j - n(\Lambda,2j)\}_{j=1}^n$
	and the image of $\chi$, lands inside the subspace
	\[
	W  = \bigoplus_{j=1}^{n} H^{0}(X,K_X^{2j}(2j-n(\Lambda,2j)))
	\]
	We have from Riemann-Roch formula,
	\begin{eqnarray*}
	dim(W) & = &  dim(Sp_{2n})(g-1) + \Sigma_{j=1}^n (2j-n(\Lambda,2j))\\
	       & = &  dim(Sp_{2n})(g-1) + \frac{1}{2}(dim(Sp_{2n}) - dim(Z_{Sp_{2n}}(e^o)), \,\ e^o \in O_{\theta}\\
	       & = & dim(Sp_{2n})(g-1) + dim(Sp_{2n}/P_{\theta})\\
	       & = & dim(Bun_{\mathcal{G}_{X,x,\theta}})
	\end{eqnarray*}
\end{proof}

\begin{coro}
	For $G = Sp_{2n}$ and $\theta$ an element in the interior of the rational weyl alcove, we have\\
	(i)$dim(\mathcal{N}ilp_{X,x,\theta}) = dim (Bun_{\mathcal{G}_{X,x,\theta}})$\\
	(ii)The Hitchin morphism $\chi$ is flat. Further every irreducible
	component of the fiber of $\chi$ has dimension equal to that of
	$Bun_{\mathcal{G}_{X,x,\theta}}$.\\
	(iii)$T^*(Bun_{\mathcal{G}_{X,x,\theta}})$ is a local complete intersection
	and $\mathcal{N}ilp_{X,x,\theta}$ is a Lagrangian complete intersection
	in $T^*(Bun_{\mathcal{G}_{X,x,\theta}})$.	
\end{coro}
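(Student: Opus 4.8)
The plan is to deduce all three assertions by feeding the preceding Corollary into Corollary~\ref{coro:1}, and then to upgrade the resulting flatness to the equidimensionality and complete intersection statements by a standard Cohen--Macaulay argument; the genuinely new input (the dimension count for the image of $\chi$) has already been carried out, so the remaining work is assembly plus the passage from flatness to the l.c.i.\ conclusions.

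First I would record that the preceding Corollary exhibits the Hitchin morphism $\chi$ as factoring through the subspace
\[
W = \bigoplus_{j=1}^{n} H^{0}(X, K_X^{2j}((2j-n(\Lambda,2j))x)),
\]
which is a linear subspace of $\bigoplus_{j=1}^{n} H^{0}(X, K_X^{2j}(Cx))$, hence a smooth affine variety, and which satisfies $\dim W = \dim Bun_{\mathcal{G}_{X,x,\theta}}$ by the Riemann--Roch computation given there. Thus the hypothesis of Corollary~\ref{coro:1} is met with $B = W$, and applying it at once yields part (i), the equality $\dim \mathcal{N}ilp_{X,x,\theta} = \dim Bun_{\mathcal{G}_{X,x,\theta}}$; the flatness and surjectivity of $\chi$ claimed in (ii); and the fact, via Proposition~\ref{prop:1}, that $\mathcal{N}ilp_{X,x,\theta}$ is a Lagrangian substack.

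Next, to obtain the equidimensionality of \emph{all} fibres asserted in (ii), I would use that $Bun_{\mathcal{G}_{X,x,\theta}}$ is a smooth equidimensional stack, so its cotangent stack $T^*Bun_{\mathcal{G}_{X,x,\theta}}$ is smooth, in particular Cohen--Macaulay of pure dimension $2\dim Bun_{\mathcal{G}_{X,x,\theta}}$, while $W$ is regular of dimension $\dim Bun_{\mathcal{G}_{X,x,\theta}}$. A flat morphism from a Cohen--Macaulay source to a regular base has Cohen--Macaulay fibres, necessarily of the expected dimension $2\dim Bun_{\mathcal{G}_{X,x,\theta}} - \dim W = \dim Bun_{\mathcal{G}_{X,x,\theta}}$; being Cohen--Macaulay, these fibres carry no embedded components and are equidimensional, so every irreducible component of every fibre of $\chi$ has dimension exactly $\dim Bun_{\mathcal{G}_{X,x,\theta}}$. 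The comparison of the generic fibre with the special fibre $\mathcal{N}ilp_{X,x,\theta}$ is precisely the $\mathbb{C}^{\times}$-scaling degeneration already used in the proof of Corollary~\ref{coro:1}.

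Finally, for (iii): the smoothness of $Bun_{\mathcal{G}_{X,x,\theta}}$ makes $T^*Bun_{\mathcal{G}_{X,x,\theta}}$ smooth, hence a local complete intersection. Since $W \cong \mathbb{A}^{N}$ with $N = \dim Bun_{\mathcal{G}_{X,x,\theta}}$, the cone $\mathcal{N}ilp_{X,x,\theta} = \chi^{-1}(0)$ is cut out in $T^*Bun_{\mathcal{G}_{X,x,\theta}}$ by the $N$ pulled-back linear coordinates on $W$; by part (i) its codimension equals $N$, so these $N$ functions form a regular sequence and $\mathcal{N}ilp_{X,x,\theta}$ is a complete intersection, which together with its being Lagrangian gives the claim. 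I expect the main obstacle to be the rigorous stacky form of the Cohen--Macaulay/regular-sequence argument, namely checking the flatness-to-l.c.i.\ passage smooth-locally on $Bun_{\mathcal{G}_{X,x,\theta}}$ so that the complete intersection and equidimensionality statements descend to the stack; this is handled following the template of \cite{G}.
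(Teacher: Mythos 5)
Your overall assembly is the same as the paper's (the paper's proof is literally ``follows exactly as in the proof of Corollary~\ref{coro:1}'', which in turn defers to \cite{G}[Proposition~1, Corollary~9, Theorem~10]): feed the dimension count for $W$ from the preceding corollary into Corollary~\ref{coro:1} to get (i), flatness and surjectivity, and the Lagrangian property. That part is fine.

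However, there is a genuine gap in your passage to the second half of (ii) and to (iii): you assert twice that the smoothness of $Bun_{\mathcal{G}_{X,x,\theta}}$ makes $T^*Bun_{\mathcal{G}_{X,x,\theta}}$ smooth, and you then extract Cohen--Macaulayness and the local complete intersection property from that. This is false for Artin stacks: the cotangent stack of a smooth equidimensional stack is not smooth in general, because the cotangent spaces $T^*_E = H^0(X, ad(E)^*\otimes K_X)$ jump in dimension with $E$. This is precisely why Beilinson--Drinfeld introduce the ``good'' condition and why the paper's Corollary~\ref{coro:1} only asserts the a priori inequality $\dim T^*Bun_{\mathcal{G}_{X,x,\theta}} \geq 2\dim Bun_{\mathcal{G}_{X,x,\theta}}$; the equality is a \emph{consequence} of the theorem, not an input. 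The correct route (Ginzburg's Corollary~9 and Theorem~10) is: present $T^*Bun_{\mathcal{G}_{X,x,\theta}}$ smooth-locally as the zero fibre $\mu^{-1}(0)$ of a moment map on the cotangent bundle of a smooth chart, so it is cut out by $\dim H$ equations in a smooth space of dimension $2\dim R$ (with $Bun = [R/H]$ locally); the bound $\dim \chi^{-1}(b) \leq \dim \mathcal{N}ilp_{X,x,\theta} \leq \dim Bun_{\mathcal{G}_{X,x,\theta}}$ coming from the isotropic property and the $\mathbb{C}^{\times}$-degeneration forces $\dim T^*Bun_{\mathcal{G}_{X,x,\theta}} = 2\dim Bun_{\mathcal{G}_{X,x,\theta}}$, hence $\mu^{-1}(0)$ is a complete intersection, hence Cohen--Macaulay. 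Only after this is established can you run your (otherwise correct) miracle-flatness argument to get flatness of $\chi$, equidimensionality of all fibres, and the regular-sequence argument exhibiting $\mathcal{N}ilp_{X,x,\theta}=\chi^{-1}(0)$ as a Lagrangian complete intersection. So the logical order must be reversed: the l.c.i.\ property of $T^*Bun_{\mathcal{G}_{X,x,\theta}}$ is derived from the dimension estimate on the nilpotent cone, not from smoothness of the cotangent stack.
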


\begin{proof}
	Proof follows exactly as in the proof of Corollary~\ref{coro:1}
\end{proof}

\begin{center}
	{ACKNOWLEDGEMENT}
\end{center}

This work was done while the author was supported by a post-doctoral fellowship
at the Tata Institute of Fundamental Research, Mumbai.

\bibliographystyle{amsplain}
\bibliography{p1}

\end{document}